\newtheorem{theorem}{Theorem}[section]
\newtheorem{lemma}[theorem]{Lemma}
\newtheorem{definition}[theorem]{Definition}
\newtheorem{algorithm}[theorem]{Algorithm}
\newtheorem{example}[theorem]{Example}
\renewcommand{\theequation}%
{\arabic{section}.\arabic{equation}}
\newcommand\ddelta\bigtriangledown
\newcommand\ld\lambda
\newcommand\Ld\Lambda
\begin{document}


\title{{\Large\textbf{A Posteriori Error Estimates for A Modified Weak Galerkin Finite Element Method  Solving Linear Elasticity Problems}}
\thanks{\footnotesize{Received***
\newline 
Project supported by the National Natural Science Foundation of China(No.11901189), the Natural Science Foundation of Hunan Province(No.\,2022JJ30271).
\newline    $\dag$Corresponding author, E-mail:  Xieyy@m.scnu.edu.cn }}} 

\author{\normalsize{Chunmei Liu$^{1}$, \quad Liuqiang Zhong$^{2}$, \quad Yingying Xie, $^{3,\dag}$, \quad Liping Zhou $^{1}$} %
\\[2mm]
\small{1. College of Science, Hunan University of Science  and Engineering, Yongzhou 425199, China;}  
\\
\small{2. School of Mathematical Sciences, South China Normal University, Guangzhou 510631,  China;}  
\\
\small{3. School of Mathematics and Information Science, GuangZhou University, Guangzhou 510006,  China}  }

\date{}
\maketitle

 {\bf Abstract}\quad
 In this paper,  a residual-type a posteriori error estimator is proposed and analyzed for
 a modified weak Galerkin finite element method solving linear elasticity problems. The estimator is proven to be both reliable and 
efficient because it provides upper and lower bounds on the actual error in a discrete energy norm.
Numerical experiments are given to illustrate the effectiveness of the this error estimator. 

{\bf Keywords}\quad
 Weak Galerkin Methods;  A Posteriori Error Estimates;    Linear Elasticity Problems

\textbf{Mathematics Subject Classification~~~}65N30, 35J20

 \section{Introduction}

\noindent In this paper, we  consider residual-type  posteriori error estimates for a modified weak Galerkin method  about the following  linear elasticity problems  
\begin{align}\label{ela-Model:1}
 \left \{ 
 \begin{array}{rl}
      -\mu \Delta \pmb{u} - (\lambda+\mu)\nabla(\nabla\cdot \pmb{u}) =\pmb{f}, &  (x,y) \in \Omega, \\
       \pmb{u}=\pmb{g}, & (x,y) \in \partial\Omega,
 \end{array}
\right.
 \end{align}
where $\Omega\subset\mathbb{R}^d (d=2,3)$ is a polytopal domain  with the boundary $\partial\Omega$, 
$\boldsymbol{f}$ si an external force,  $\boldsymbol{u}: \Omega\rightarrow \mathbb{R}^d$ is a displacement vector,  the Lam\'{e} constants $\mu=\frac{E}{2(1+\nu)}$ and    
$\lambda=\frac{E\nu}{(1+\nu)(1-2\nu)}$ which can be composed by the elastic modulus  
$E$  and  Poisson's ratio $\nu\in [0,0.5)$.

In this paper, we assume the solution $\pmb{u}$ of the model\ref{ela-Model:1} satisfies the  $H^2-$ regularity estimate according to \cite{Brenner1992elasticity,Brenner1994Methods}
\begin{eqnarray*}
\| \pmb{u}\|_2+\lambda\|\nabla\cdot\pmb{u}\|_1\leq C\|\pmb{f}\|,
\end{eqnarray*}
where  $C$ is independent of $\lambda$.

The weak Galerkin finite element method(WGFEM) was first prosesed by Wang and Ye \cite{Wang2013problems} to solving a second order elliptic problem, then this method was developed for various PDEs, such as   elliptic problems \cite{Huang2017equations, Li2018problems}, parabolic equations \cite{Zhou2019problems, Zhu2019equation},  Stokes equations\cite{Wang2016equtions,Wang2016stokesequations,WangZhai2018problem}, Navier-Stokes equations\cite{Hu2019equations, Liu2018equations},  Biharmonic equation\cite{Mu2014equation, Zhang2015order}, and so on.  Moreover,  there is also a lot of work in solving linear elastic problems by using the weak finite element method\cite{Chen2016stresses, Harper2019meshes,Wang2016formulation, Wang2018form, Wang2018mixedform, Yi2019elasticity}. In \cite{Chen2016stresses}
and \cite{Wang2018mixedform}, the WGFEMs in mixed form have been developed, although their numerical schemes are different, the solutions of the stress tensors with strong symmetry are achieved;  In \cite{Harper2019meshes} and  \cite{Yi2019elasticity},  the linear elasticity problems are considered on the triangles or tetrahedrons and the tetrahedrons or hexahedrons respectively. But  the standary Raviart-Thomas spaces are used to define the differential of approximate functions,  and  the stable numerical schemes are obtained and the“locking-free” property of the numerical schemes is proved.  In particular, the two numerical schemes don't use stabilizers.  In \cite{Wang2016formulation},  a numerical scheme with“locking-free” property is constructed for mesh generation which are shape regularity;  In \cite{Wang2018form}, the hybrid technique is applied to the WGFEM for the linear elasticity problems, and the optimal error estimates are obtained.

Recently, there have been quite visible research activities on a posteriori error estimates of the WGFEM and the convergence for the adaptive WGFEM for second order elliptic problems \cite{Chen2014problems,Zhang2016problems,Zhang2018problems, 
Adler2019method,Lihengguang2019meshes,  Xie2021problems,
Xie2022problem,Wanghui2022mesh}.  A  residual-type a posteriori error estimator is designed firstly based on the triangle or tetrahedron  meshes in  \cite{Chen2014problems}, and the reliability and efficiency of the estimator are testified.  Then a stabilizer is added to the variational problem of  second order elliptic problems in \cite{Zhang2016problems}, a  residual-type a posteriori error estimator is constructed and the  reliability and efficiency of the estimator are also testified.   A  residual-type a posteriori error estimator is also construced in \cite{Zhang2018problems}, and the form of this estimator is  different from the one in in  \cite{Chen2014problems}. After that, a posteriori error estimator with a simple form is presented in \cite{Lihengguang2019meshes} , and   applied to general meshes such as hybrid, polytopal and meshes with hanging nodes.   
An adaptive algorithm based on WG and modified WG method is designed for the elliptic problem in \cite{Xie2021problems} and \cite{Xie2022problem}, and the convergence of the adaptive algorithm is proved,  respectively.   A  residual-type a posteriori error estimator is designed based on  the weak Galerkin least-squares finite element method applied to the reaction-diffusion equation in \cite{Adler2019method},  the  reliability and efficiency of the estimator are also testified.  A posteriori error estimator of edge residual-type Weak Galerkin
mixed finite element method solving second-order elliptic problems in \cite{Wanghui2022mesh}, where
two different ways of a posteriori error estimator are presented, both of which hold on polygonal mesh. 
The posteriori error estimates of the weak Galerkin method for the Stokes equation have also been studied such as \cite{Zheng2017stokesproblem,Bao2019meshes}. However, to our best knowledge,  there exists no work in the literature about the posteriori error estimates for the linear elasticity problems.
Our work is motivated by the  posteriori error estimates about second order elliptic problems and  the Stokes equations, we design the following the posteriori error estimator for the    linear elasticity problems 
\begin{align*}
\eta^2(\pmb{v}_h,\mathcal{T}_h)&:=\sum\limits_{\tau\in\mathcal{T}_h}\left(\eta_c^2(\pmb{v}_h,\tau)+\eta_{nc}^2(\pmb{v}_h,\tau)+\mathrm{osc}^2(\pmb{f},\tau)+s_{\tau}(\pmb{v}_h, \pmb{v}_h)\right),  
\end{align*} 
where
\begin{align*}
 \eta_c^2(\pmb{v}_h,\tau)&=h_{\tau}^2(\mu^{-1}+(\mu+\lambda)^{-1})\|\pmb{f}+\nabla\cdot(\mu\nabla_w\pmb{v}_h)+\nabla((\mu+\lambda)\nabla_w\cdot\pmb{v}_h)\|^2_{\tau},\\ \eta_{nc}^2(\pmb{v}_h,\tau)&=\mu^{-1}\sum\limits_{e\in \partial \tau} h_e\| J_e(\mu\nabla_w\pmb{v}_h+(\mu+\lambda)(\nabla_w\cdot \pmb{v}_h)\pmb{I})\|^2_e, \\ 
\mathrm{osc}^2(\pmb{f},\tau)&=h^2_\tau(\mu^{-1}+(\mu+\lambda)^{-1})\|\pmb{f}-\pmb{f}_h\|^2_{\tau},\\
s_{\tau}(\pmb{v}_h, \pmb{v}_h)&= h_\tau^{-1}\langle Q_b\pmb{v}_0-\pmb{v}_b, 
Q_b\pmb{v}_0-\pmb{v}_b\rangle_{\partial\tau},
\end{align*}
with  $h_\tau$ being the diameter of the element $\tau$, $h_e$ being the length of edge or face $e$, $\nabla_w\pmb{v}_h$ and $\nabla_w\cdot\pmb{v}_h$ is the weak gradient and the weak divergence of  $\pmb{v}_h$,      $J_e$ represents the jump across the edge or face $e$,   $Q_b$ is the $L^2$ projection operator to $V_{k-1}(e)$,   $\pmb{f}_h$ is the projection of   $\pmb{f}$  to the weak Galerkin finite element space. 

In this work,  we prove the reliability by the following upper  bound 
\begin{align*} 
 \|\mu^{1/2}( \nabla\pmb{u}- \nabla_{w}\pmb{u}_h)\|^2_{\mathcal{T}_h}+
\|(\mu+\lambda)^{1/2} (\nabla\cdot\pmb{u}- \nabla_{w}\cdot\pmb{u}_h)\|^2_{\mathcal{T}_h} \leq  C_1^2\eta^2(\pmb{u}_h, \mathcal{T}_h),
 \end{align*}
and the efficiency by the following  lower bound   
\begin{align*}
\eta^2\lesssim \|\mu^{1/2}( \nabla\pmb{u}- \nabla_{w}\pmb{u}_h)\|^2_{\tau}+
\|(\mu+\lambda)^{1/2} (\nabla\cdot\pmb{u}- \nabla_{w}\cdot\pmb{u}_h)\|^2_{\tau}+ \mathrm{osc}^2(\pmb{f},\mathcal{T}_h).
\end{align*}

In this paper, in addition to a special constant, we always adopt the mark $a\lesssim b$, which
indicates that there is a constant $C$ such that $a\leq C b$.

The rest of this paper is organized as follow. In section 2, we description the spaces of funcitons to be used ,  weak gradient operator, weak divergence operator and present the modified weak finite element scheme.  In section 3, we introduce four modules of adaptive algorithm and the flow of adaptive algorithm.  Section 4 is devoted to the  a posteriori error analysis. In section 5,  we verify the theoretical results by  two numerical examples.

\section{Prelimimaries and Notations}

In order to describe the modified weak Galerkin finite element method, we  recall  the definions of weak gradient and  weak divergence,  the weak Galerkin finite element spaces,  the definions of discrete weak gradient and  discrete weak divergence, and the corresponding  modified weak finite element scheme. 

 For any bounded domain $K\subset \mathbb{R}^d(d=2,3)$ with  Lipschitz continuous boundary $\partial K$ , we use the standard definitions for the Sololev spaces 
\begin{align*}
H^m(K)=\{v\in L^2(K): D^{\alpha} v \in L^2(K), \forall |\alpha | \leq m\},
\end{align*}
where $D^{\alpha} v=\frac{\partial ^{|\alpha|} v}{{\partial x_1}^{\alpha_1}\cdots{\partial x_d} ^{\alpha_d} }$, and $|\alpha|=\alpha_1+\cdots+\alpha_d$. 
Let  $H^m_0(K)$ be  a subspace of  $H^m(K)$ such as
\begin{align*}
H^m_0(K)=\{v\in H^m(K):  v=0~\mathrm{on} ~\partial K\}.
\end{align*}

We  also use the standard definition of norm  $\|\cdot\|_{m,K}$ in these Sobolev spaces $H^m(K)$, $[H^m(K)]^d$ and$[H^m(K)]^{d\times d}$.  Specifically when $m=0$, the space $H^m(K)= L^2(K)$.  In addition,   we denote $<\cdot, \cdot>_{\partial K}$ to be the inner productor duality pairing in  $L^2(\partial K)$, and  $\bold{H}(\mathrm{div}; K)=\{\tau \in [L^2(K)]^{d\times d}: \nabla\cdot \tau \in (L^2(K))^d\}$ with the norm  $\|\tau\|_{\mathrm{div};K}=(\|\tau\|^2_K+\|\nabla\cdot \tau\|^2_K)^{\frac{1}{2}}$. 

\subsection{Weak gradient and divergence operators}
In this subsection, we review the definitions of weak gradient and weak divergence operators which can be applied to descretize the linear elasticity problems \cite{Yi2019elasticity}.  Let  $K$ be any polygonal domain with boundary $\partial K$ and  $e\in\partial K$ be an edge ($d=2$) or a face $d=3$).  Denote  the space of weak vector-valued funciton $\mathcal{V}(K)$  as  follow
\begin{align*}
\mathcal{V}(K)=\left\{\pmb{v}=\{\pmb{v}_0, \pmb{v}_b \}: \pmb{v}_0\in [L^2(K)]^d,  \pmb{v}_b\cdot \pmb{n}\in H^{-\frac{1}{2}}(\partial K) \right\},
\end{align*}
where  $\pmb{n}$ is the unite outward normal vector on $\partial K$,  the first compont $\pmb{v}_0$ and the second component $\pmb{v}_b$ represent   the vector $\pmb{v}$ in  $K$ and on the boundary $\partial K$. Note that  $\pmb{v}_b$ may not necessarily be related to the  trace of 
$\pmb{v}_0$ on $\partial K$, even if the trace is well defined.

According to \cite{Yi2019elasticity},  we describe the definition of the weak divergence as follow.
\begin{definition}(Weak Divergence) 
For any weak vector-valued function $\pmb{v}\in\mathcal{V} (K) $,   the  weak divergence   $\nabla_{w,\tau}\cdot \pmb{v}$ is defined as a linear function in the Sobolev space  $H^1(K)$
\begin{align*}
(\nabla_{w,K}\cdot \pmb{v}, \phi)_K= -(\pmb{v}_0, \nabla \phi)_K+<\pmb{v}_b\cdot \pmb{n}, \phi>_{\partial K} , \forall \phi\in H^1(K).
\end{align*}
\end{definition}

In order to describe the weak gradient operator, we introduce the following  space of weak vector-valued funtions on $K$, such as 
\begin{align*}
\mathcal{W}(K)=\left\{\pmb{v}=\{\pmb{v}_0, \pmb{v}_b \}: \pmb{v}_0\in [L^2(K)]^d,  \pmb{v}_b\in [H^{\frac{1}{2}}(\partial K)]^d\right\}.
\end{align*}

According to \cite{Yi2019elasticity},  we define  the weak gradient as follow.
\begin{definition}(Weak Gradient) \label{def:weakg}
For any weak vector-valued function $\pmb{v}\in\mathcal{W}(K) $,  the weak gradient $\nabla_{w,K}\pmb{v}$  is defined as a linear function in the Sobolev space $ H[(div; K)]^{d\times d}$  
\begin{align*}
(\nabla_{w,K}\pmb{v}, \psi)_\tau= -(\pmb{v}_0, \nabla\cdot\psi)_K+<\pmb{v}_b, \psi \pmb{n}>_{\partial K} , \forall \psi\in [H(\mathrm{div}; K)]^{d},
\end{align*}
$\pmb{n}$ is the unite outward normal vector on $\partial K$.
\end{definition}

\subsection{The modified weak finite element scheme}

In this subsection, we  introduce the modified weak finite element scheme, so some notations are descripted fistly.  Let $\mathcal{T}_h$ be a partition fo the domain $\Omega$  consisiting of elements which are closed and simply connected triangles or tetrahedrons,  let $\mathcal{E}_h$ be the union of all edges or faces of the mesh elements. For any element $\tau\in\mathcal{T}_h$, 
$h_\tau$ denots the diameter of $T$,   $h=\max_{\tau\in \mathcal{T}_h} h_\tau$ denotes the mesh size of  $\mathcal{T}_h$.

For each element $\tau \in \mathcal{T}_h$, let  rigid motion(RM) space be  
\begin{eqnarray*}
RM(\tau)=\{ \pmb{a}+\eta \pmb{x}: \pmb{a}\in  \mathbb{R}^d, \eta\in so(d)\},
\end{eqnarray*}
where $\pmb{x}$ is the position vector on the element $\tau $, $so (d) $ is $d\times d-$dimensional skew-symmetric matrix space. A finite dimension space is formed by traces of functions on each boundary $e\in \partial \tau $in RM space as follows 
\begin{eqnarray*}
P_{RM}(e)=\{\pmb{v}\in [L^2(e)]^d:\pmb{v}=\tilde{\pmb{v}}|_e, \forall \tilde{\pmb{v}}\in RM(\tau), e\subset\partial \tau\}.
\end{eqnarray*}
 
For any integer $k \geq 1 $,  the local weak finite element space on any element $\tau $ is 
\begin{align*}
\mathcal{V}(\tau)=\left\{\pmb{v}=\{\pmb{v}_0, \pmb{v}_b \}: \pmb{v}_0\in [P_k(\tau)]^d,  \pmb{v}_b\in V_{k-1}(e), \forall e \subset \partial\tau \right\},
\end{align*}
wher $V_{k-1}(e)=[P_{k-1}(e)]^d + P_{RM}(e)$, $P_k(\tau)$ is the set of  polynomials ofdegree no greater the $k$  on  $\tau $,  $P_{k-1}(e)$ is the set of polynomials of degree no greater than $k-1$ on   $e\subset\partial \tau$.
Then, we denote the global weak finite element space  $\mathcal{V}_h$  and its subspace  $\mathcal{V}_h^0$ as follows
\begin{align*}
\mathcal{V}_h&=\left\{\pmb{v}=\{\pmb{v}_0, \pmb{v}_b \}: \pmb{v}_0|_{\tau}\in [P_k(\tau)]^d,   
\pmb{v}_b|_{e}\in V_{k-1}(e),  \tau\in\mathcal{T}_h,  \forall e \in  \mathcal{E}_h \right\},\\ 
\mathcal{V}_h^0&=\left\{\pmb{v}=\{\pmb{v}_0, \pmb{v}_b \}\in \mathcal{V}_h: \pmb{v}_b=\pmb{0}~\mathrm{on}~\partial\Omega \right\},
\end{align*}
 then, according to \cite{Brenner1994Methods, Zhangran2020problems}, we denote the local matrix-valued function space  $\Sigma_h(\tau)$  and the global matrix-valued function space $\Sigma_h$ as follows 
\begin{align*}
\Sigma(\tau)&=\left\{\pmb{w}\in[P_{k-1}(\tau)]^{d\times d}\right\}. \\
\Sigma_h&=\left\{\pmb{w}\in[L^2(\tau)]^{d\times d}: \pmb{w}|_{\tau}\in \Sigma(\tau)\right\}.
\end{align*}

Based on these definitions above,  we now introduc the discrete weak gradient operator, the discrete weak divergence operator and the modified weak finite element scheme.
\begin{definition}(Discrete Weak Divergence,\cite{Yi2019elasticity,Zhangran2020problems}) 
For each  $\tau\in \mathcal{T}_h$,  the discret  weak divergenc $\nabla_{w,\tau}\cdot \pmb{v}\in P_{k-1}(\tau)$  of $\pmb{v}\in\mathcal{V} (\tau) $ satisfies the following formula
\begin{align} \label{discon-weak-diver}
(\nabla_{w,\tau}\cdot \pmb{v}, \phi)_\tau= -(\pmb{v}_0, \nabla \phi)_\tau+<\pmb{v}_b\cdot \pmb{n}, \phi>_{\partial\tau} , \forall \phi\in P_{k-1}(\tau),
\end{align}
where  $\pmb{n}$  is the unite outward normal vector on $\partial \tau$ .
\end{definition}

\begin{definition}(Discrete Weak Gradient,\cite{Yi2019elasticity,Zhangran2020problems}) 
For each  $\tau\in \mathcal{T}_h$,  the discret weak gradient $\nabla_{w,\tau} \pmb{v}\in \Sigma(\tau)$  of  $\pmb{v}\in\mathcal{V}(\tau) $, satisfies the following formula
\begin{align}\label{discon-weak-grad}
(\nabla_{w,\tau}\pmb{v}, \psi)_\tau= -(\pmb{v}_0, \nabla\cdot\psi)_\tau+<\pmb{v}_b, \psi \pmb{n}>_{\partial\tau} , \forall \psi\in\Sigma(\tau),
\end{align}
 where  $\pmb{n}$  is the unite outward normal vector on $\partial \tau$ .
\end{definition}

Now, we turn to present the modified weak finite element scheme.   Using the finite element space of order $k$, we introduce the following discrete variational 
problem of (\ref{ela-Model:1}):  Find $\pmb{u}_h=\{\pmb{u}_0,\pmb{u}_b\}\in \mathcal{V}_h$, $\pmb{u}_b|_{\partial \Omega}=Q_b{\pmb{g}}$, such that
\begin{align}\label{Dis-Weak-Gal}
 a_w(\pmb{u}_h,\pmb{v})=(\pmb{f},\pmb{v}_0), \forall  \pmb{v}=\{\pmb{v}_0,\pmb{v}_b\}\in \mathcal{V}_h^0,
 \end{align}
where  the bilinear form $a_w(\cdot,\cdot)$ is defined by
\begin{align}\label{discon-weak-problem}
 a_w(\pmb{w},\pmb{v})=\mu\sum\limits_{\tau\in\mathcal{T}_h}(\nabla_w\pmb{w},\nabla_w\pmb{v})_\tau+
(\mu+\lambda)\sum\limits_{\tau\in\mathcal{T}_h}(\nabla_w\cdot\pmb{w},\nabla_w\cdot\pmb{v})_\tau +s(\pmb{w},\pmb{v}),
 \end{align}
and 
\begin{align}\label{s}
 s(\pmb{w},\pmb{v})=\sum_{\tau\in \mathcal{T}_h} h_\tau^{-1}\langle Q_b\pmb{w}_0-\pmb{w}_b, 
Q_b\pmb{v}_0-\pmb{v}_b\rangle_{\partial\tau},
 \end{align} 
here,  $Q_b$ is the local $L^2-$ projection onto the space $V_{k-1}(e)$. 
 
According to \cite{Yi2019elasticity,Zhangran2020problems},  there exists a unique solution to the modifed weak Galerkin finite element method defined in  (\ref{discon-weak-problem}).  
In our paper, we will not repeat this conclusion. 

In the following section, we  give a brief introduction of the adaptive algorithm based on the modified  weak Galerkin finite element method  by refering to the standard AFEM in \cite{Cascon2008method,Liu2016dimensions}.

\section{An adaptive modified weak Galerkin algorithm}
\setcounter{equation}{0}

Let $\mathcal{T}_0$ be a
triangles grid  or tetrahedrons grid on the bounded domain $Ω$, and let $\{\mathcal{T}_l\}_{l>0}$ be a sequence of nested grids by a series of local refinement. The grid $\mathcal{T}_{l+1}$ is generated from $\mathcal{T}_{l}$ by the following four algorithm modules :
 
\begin{equation}\label{equation_8}
{\mbox{\bf SOLVE}}\ \rightarrow\ {\mbox{\bf ESTIMATE}}\ \rightarrow\
{\mbox{\bf MARK}}\ \rightarrow \ {\mbox{\bf REFINE}}.
\end{equation}

The specific roles of these four modules are as follows:

(1)\mbox{\bf SOLVE}

For the given functions $\pmb{f}\in (L^2(\Omega))^d(d=2,3)$ and a given grid $\mathcal{T}_l$, we assume that the algorithm module {\bf SOLVE} exactly outputs the discrete solution $\boldsymbol{u}_l$  of (\ref{discon-weak-problem}) as
 \begin{eqnarray*}
\boldsymbol{u}_l={\bf SOLVE}(\mathcal{T}_l, \pmb{f},  \pmb{g}) \in\mathcal{V}_l.
\end{eqnarray*}

(2) {\bf ESTIMATE}  

For  a given grid $\mathcal{T}_l$ ,  let  $e\in \mathcal{E}_l$ be shared by two  element $\tau_1$ and $\tau_2$,let  $\pmb{n}_1$ and $\pmb{n}_2$ be  the unite outward normal vector on $e$ belong to  $\tau_1$ and $\tau_2$ respectively.   For any $\pmb{w}\in \Sigma_h$,  denote $[\pmb{w}]_e= \pmb{w}|_{\tau_1}- \pmb{w}|_{\tau_2}$,  $[\pmb{w}\pmb{n}]_e=\pmb{w}|_{\tau_1}\pmb{n}_1+\pmb{w}|_{\tau_2}\pmb{n}_2$. 
 
We denote the jump across  $e$  as follows 
\begin{align*}
J_e(\mu\nabla_w\pmb{v}_l+(\mu+\lambda)(\nabla_w\cdot \pmb{v}_l)\pmb{I})&=
\begin{cases}
[(\mu\nabla_w\pmb{v}_l+(\mu+\lambda)(\nabla_w\cdot \pmb{v}_l)\pmb{I})\pmb{n}]_e,&   \mbox{if}~e\in \mathcal{E}_l^{0},\\
0, &  \mbox{otherwise},
\end{cases}\\
J_e^0(\pmb{v}^l_0)&=
\begin{cases}
\pmb{v}^l_0 |_{\partial \tau_1}-\pmb{v}^l_0 |_{\partial \tau_2},  & \mbox{if}~e\in \mathcal{E}_l^{0},\\
0, &  \mbox{otherwise}.
\end{cases}
\end{align*}

For  a given grid $\mathcal{T}_l$ and a given function $\pmb{u}_k\in \mathcal{V}_l$, the posteriori error estimator based on $\tau$ is given by
\begin{align}\label{eta:1}
\eta^2(\pmb{u}_l,\mathcal{T}_l)&:=\sum\limits_{\tau\in\mathcal{T}_l}\left(\eta_c^2(\pmb{u}_h,\tau)+\eta_{nc}^2(\pmb{u}_h,\tau)+\mathrm{osc}^2(\pmb{f},\tau)+s_{\tau}(\pmb{u}_h, \pmb{u}_h)\right),  
\end{align} 
where
\begin{align*}
 \eta_c^2(\pmb{u}_l,\tau)&=h_{\tau}^2(\mu^{-1}+(\mu+\lambda)^{-1})\|\pmb{f}+\nabla\cdot(\mu\nabla_w\pmb{u}_l)+\nabla((\mu+\lambda)\nabla_w\cdot\pmb{u}_l)\|^2_{\tau},\\ \eta_{nc}^2(\pmb{u}_h,\tau)&=\mu^{-1}\sum\limits_{e\in \partial \tau} h_e\| J_e(\mu\nabla_w\pmb{u}_l+(\mu+\lambda)(\nabla_w\cdot \pmb{u}_l)\pmb{I})\|^2_e, \\ 
\mathrm{osc}^2(\pmb{f},\tau)&=h^2_\tau(\mu^{-1}+(\mu+\lambda)^{-1})\|\pmb{f}-\pmb{f}_l\|^2_{\tau},\\
s_{\tau}(\pmb{u}_l, \pmb{u}_l)&= h_\tau^{-1}\langle Q_b\pmb{u}_{l,0}-\pmb{u}_{l,b}, 
Q_b\pmb{u}_{l,0}-\pmb{u}_{l,b}\rangle_{\partial\tau},
\end{align*}
with  $h_\tau$ being the diameter of the element $\tau$, $h_e$ being the length of edge or face $e$, $\nabla_w\pmb{u}_l$ and $\nabla_w\cdot\pmb{u}_l$ is the weak gradient and the weak divergence of  $\pmb{u}_l$,      $J_e$ represents the jump across the edge or face $e$,   $Q_b$ is the $L^2$ projection operator to $V_{l-1}(e)$,   $\pmb{f}_l$ is the projection of   $\pmb{f}$  to the weak Galerkin finite element space. 

For any $\mathcal{W}_l\subset \mathcal{T}_l$ and $\pmb{u}_k\in \mathcal{V}_l$,  define the following sets by
\begin{align*}
 \eta^2(\pmb{u}_l,\mathcal{W}_l)=\sum\limits_{\tau\in\mathcal{W}_l}\eta_c^2(\pmb{u}_l,\tau), \ 
\mathrm{osc}^2(\pmb{f},\mathcal{W}_l)=\sum\limits_{\tau\in\mathcal{W}_l}\mathrm{osc}^2(\pmb{f},\tau).
\end{align*}

For any given grid $\mathcal{T}_l$ and the corresponding discrete exact $\pmb{u}_l\in \mathcal{V}_l$ of (\ref{discon-weak-problem}),  we can obtain the  posteriori error estimator $\eta^2_{\mathcal{T}_l}(\boldsymbol{u}_l,\tau)$ of any element $\tau \in\mathcal{T}_l$ by the following algorithm module
\begin{align*}
\eta^2_{\mathcal{T}_l}(\boldsymbol{u}_l,\tau)=\mathrm{{\bf ESTIMATE}}(\mathcal{T}_l,\boldsymbol{u}_l,\boldsymbol{f},  \boldsymbol{g}).
\end{align*}

(3) {\bf MARK}

In this paper, we utilize the D\"{o}rfler marking way(\cite{Dorfler1966equation}) to mark elements which will be refined.
Given a grid $\mathcal{T}_l$, a set of posteriori error  estimators $\{\eta^2_{\mathcal{T}_l}(\boldsymbol{u}_l,\tau)\}_{\tau\in
\mathcal{T}_l}$  and a D\"{o}rfler marking  parameter $\vartheta\in (0, 1)$, we can get a
marked element set $\mathcal{M}_l\subset \mathcal{T}_l$ by the following  algorithm module
\begin{eqnarray*}
\mathcal{M}_l=\mathrm{{\bf MARK}}(\eta^2(\boldsymbol{u}_l,\mathcal{T}_l),
\mathcal{T}_l, \vartheta),
\end{eqnarray*}
in addition, the set $\mathcal{M}_l$ satisfies
\begin{eqnarray*}
\eta^2_{\mathcal{T}_l}(\boldsymbol{u}_l,\mathcal{M}_l)\geq \vartheta
\eta^2(\boldsymbol{u}_l,\mathcal{T}_l)
\end{eqnarray*}
and has a minimal cardinality.

(4)   {\bf REFINE}

We assume that a module {\bf REFINE} implements an iterative or a recursive bisection (see\cite{Xie2021problems}).   For a given
number $l>1$, any grid $\mathcal{T}_k\in  \mathcal{L}(\mathcal{T}_0)$  and a subset $\mathcal{M}_l\subset \mathcal{T}_l$, we can obtain a conforming grid $\mathcal{T}_{l+1}\in  \mathcal{L}(\mathcal{T}_0)$ by the algorithm module {\bf REFINE} as
\begin{align*}
\mathcal{T}_{l+1}= \mathrm{{\bf REFINE}}(\mathcal{T}_{l},\mathcal{M}_{l}).
\end{align*}

 Using the above four algorithm modules, we design an adaptive modified weak Galerkin finite element method(AMWG-FEM) as follow. 
\begin{algorithm}[AMG-FEM]\label{ela-AFEM}
 For given functions $\boldsymbol{f}$, $\boldsymbol{g}$, choosing a D\"{o}rfler marking parameter $\vartheta\in (0, 1)$ and a error control constant $tol$, the modules of AMWG-FEM algorithm  is
\begin{enumerate}
  \item Give an initial conforming grid $\mathcal{T}_0$ and set $l=0$.
  \item $\boldsymbol{u}_l=\mathrm{{\bf SOLVE}}(\mathcal{T}_l, \boldsymbol{f}, \boldsymbol{g})$.
  \item $\eta^2_{\mathcal{T}_l}(\boldsymbol{u}_l,\tau)=\mathrm{{\bf ESTIMATE}}(\mathcal{T}_l,\boldsymbol{u}_l,
\boldsymbol{f}, \boldsymbol{g})$.  If $\eta^2(\boldsymbol{u}_l,  {\mathcal{T}_l}) <tol$, then the algorithm stops.
  \item $\mathcal{M}_l=\mathrm{{\bf MARK}}(\eta^2(\boldsymbol{u}_l,\mathcal{T}_l),
\mathcal{T}_l, \vartheta)$.
  \item $\mathcal{T}_{l+1}=\mathrm{{\bf REFINE}}(\mathcal{T}_l,
  \mathcal{M}_l)$.
  \item Set $l = l + 1$ and go to 2.
\end{enumerate}
\end{algorithm}

\section{A posteriori error analysis for the MWG method}
 \setcounter{equation}{0}

This section is devoted to  a study of reliability and  efficiency  for the error estimator
 $\eta(\pmb{u}_h,\mathcal{T}_k)$
defined in (\ref{eta:1}).  Firstly,  we give the followinn three lemmas.
\begin{lemma}\label{lem:weakg-s}
For any  $\pmb{v}=\{\pmb{v}_0, \pmb{v}_b\}\in \mathcal{V}_h$,  we have 
\begin{eqnarray}\label{equ:operator1}
\|\nabla_w\pmb{v}-\nabla\pmb{v}_0\|_{\mathcal{T}_h}^2
\lesssim s(\pmb{v}, \pmb{v}).
\end{eqnarray}
\end{lemma}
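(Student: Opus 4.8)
The plan is to exploit the defining relation of the discrete weak gradient in (\ref{discon-weak-grad}) tested against the classical gradient of the interior component $\pmb{v}_0$. First I would observe that since $\pmb{v}_0|_\tau\in[P_k(\tau)]^d$, its classical gradient satisfies $\nabla\pmb{v}_0\in[P_{k-1}(\tau)]^{d\times d}=\Sigma(\tau)$, so $\nabla\pmb{v}_0$ is an admissible test tensor in (\ref{discon-weak-grad}) and the difference $\nabla_{w,\tau}\pmb{v}-\nabla\pmb{v}_0$ itself lies in $\Sigma(\tau)$. Subtracting the elementwise integration-by-parts identity
\[
(\nabla\pmb{v}_0,\psi)_\tau=-(\pmb{v}_0,\nabla\cdot\psi)_\tau+\langle\pmb{v}_0,\psi\pmb{n}\rangle_{\partial\tau}
\]
from (\ref{discon-weak-grad}) cancels the two volume terms and leaves, for every $\psi\in\Sigma(\tau)$,
\[
(\nabla_{w,\tau}\pmb{v}-\nabla\pmb{v}_0,\psi)_\tau=\langle\pmb{v}_b-\pmb{v}_0,\psi\pmb{n}\rangle_{\partial\tau}.
\]
Choosing $\psi=\nabla_{w,\tau}\pmb{v}-\nabla\pmb{v}_0$ makes the left-hand side equal to $\|\nabla_{w,\tau}\pmb{v}-\nabla\pmb{v}_0\|_\tau^2$.

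The step that links the right-hand side to the stabilizer $s_\tau(\pmb{v},\pmb{v})$ is the observation that, for $\psi\in[P_{k-1}(\tau)]^{d\times d}$, the trace $\psi\pmb{n}|_e$ is a vector polynomial of degree at most $k-1$ on each face $e\subset\partial\tau$, hence $\psi\pmb{n}|_e\in[P_{k-1}(e)]^d\subset V_{k-1}(e)$, the range of $Q_b$. The $L^2(e)$-orthogonality $\langle\pmb{v}_0-Q_b\pmb{v}_0,\psi\pmb{n}\rangle_e=0$ then permits replacing $\pmb{v}_0$ by $Q_b\pmb{v}_0$ face by face, yielding
\[
\|\nabla_{w,\tau}\pmb{v}-\nabla\pmb{v}_0\|_\tau^2=-\sum_{e\subset\partial\tau}\langle Q_b\pmb{v}_0-\pmb{v}_b,\psi\pmb{n}\rangle_e.
\]

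From here I would apply Cauchy--Schwarz on each face and the discrete trace/inverse inequality $\|\psi\|_{\partial\tau}\lesssim h_\tau^{-1/2}\|\psi\|_\tau$ valid for polynomials on a shape-regular element, then cancel one factor of $\|\psi\|_\tau=\|\nabla_{w,\tau}\pmb{v}-\nabla\pmb{v}_0\|_\tau$, square, and use that $\tau$ has a uniformly bounded number of faces to get
\[
\|\nabla_{w,\tau}\pmb{v}-\nabla\pmb{v}_0\|_\tau^2\lesssim h_\tau^{-1}\|Q_b\pmb{v}_0-\pmb{v}_b\|_{\partial\tau}^2=s_\tau(\pmb{v},\pmb{v}).
\]
Summing over $\tau\in\mathcal{T}_h$ and recalling the definition (\ref{s}) of $s(\pmb{v},\pmb{v})$ gives the assertion. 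The only genuinely delicate point I expect is verifying the membership $\psi\pmb{n}|_e\in V_{k-1}(e)$, since it is precisely this fact that makes the $L^2$-projection $Q_b$ transparent and produces the stabilizer quantity $Q_b\pmb{v}_0-\pmb{v}_b$; the remaining trace and inverse estimates are standard and rely only on shape regularity and the polynomial nature of $\psi$.
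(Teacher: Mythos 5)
Your proof is correct and follows essentially the same route as the paper: test the defining identity of the discrete weak gradient against $\nabla\pmb{v}_0$, integrate by parts to reduce everything to the boundary term $\langle\pmb{v}_b-\pmb{v}_0,\psi\pmb{n}\rangle_{\partial\tau}$, take $\psi=\nabla_{w,\tau}\pmb{v}-\nabla\pmb{v}_0$, and close with Cauchy--Schwarz plus the trace/inverse inequality. The one difference is to your credit: you make explicit the $L^2(e)$-orthogonality step $\langle\pmb{v}_0-Q_b\pmb{v}_0,\psi\pmb{n}\rangle_e=0$ (justified by $\psi\pmb{n}|_e\in[P_{k-1}(e)]^d\subset V_{k-1}(e)$), which is needed to produce the stabilizer quantity $Q_b\pmb{v}_0-\pmb{v}_b$ and which the paper's proof silently elides when it jumps from $\langle\pmb{v}_0-\pmb{v}_b,\cdot\rangle_{\partial\tau}$ to a bound in terms of $s(\pmb{v},\pmb{v})^{1/2}$.
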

\begin{proof}
By the definition \ref{def:weakg} and  Green formula, we will get the relationship between the weak gradient and classical gradient as follows
\begin{align*}
(\nabla_{w,\tau}\pmb{v}, \boldsymbol{\varphi})_\tau= (\nabla \pmb{v}_0,\boldsymbol{\varphi})_\tau - \langle \pmb{v}_0-\pmb{v}_b, \boldsymbol{\varphi} \pmb{n}\rangle_{\partial\tau} , \forall \boldsymbol{\varphi}\in [H^1(\mathrm{div};\tau)]^{d}.
\end{align*}

Let $\boldsymbol{\varphi}=\nabla_w\pmb{v}-\nabla\pmb{v}_0$,  by using trace inequality, we have
\begin{eqnarray*}
\lefteqn{\|\nabla_w\pmb{v}-\nabla\pmb{v}_0\|_{\mathcal{T}_h}^2}\\
&&=\sum_{\tau\in\mathcal{T}_h}\langle \pmb{v}_0-\pmb{v}_b, (\nabla_{w,\tau}\pmb{v}-\nabla\pmb{v}_0)\pmb{n}\rangle_{\partial \tau}
\\
&&\leqslant (s(\pmb{v}, \pmb{v}))^{1/2}\cdot \sum_{\tau\in\mathcal{T}_h} h_\tau^{1/2}\|\nabla_{w,\tau}\pmb{v}_h-\nabla\pmb{v}_0\|_{\partial \tau}\\
&&\lesssim  (s(\pmb{v}, \pmb{v}))^{1/2}\cdot \|\nabla_w\pmb{v}-\nabla\pmb{v}_0\|_{\mathcal{T}_h}, 
\end{eqnarray*}

Dividing $\|\nabla_w\pmb{v}-\nabla\pmb{v}_0\|_{\mathcal{T}_h}$ on both sides of the above equation, then  we obtain (\ref{lem:weakg-s}). 
\end{proof}
 
 \begin{lemma}\label{lem:J_3s}
For any $\pmb{v}=\{\pmb{v}_0, \pmb{v}_b\}\in \mathcal{V}_h$, we have
	\begin{eqnarray}\label{equ:J_3s}
	\sum_{e\in\mathcal{E}_h}h_e^{-1}\|[\pmb{v}_0]\|_e^2\lesssim s(\pmb{v}, \pmb{v}).
	\end{eqnarray}	
\end{lemma}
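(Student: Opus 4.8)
The plan is to work edge by edge and to reduce the jump of the interior component $\pmb{v}_0$ to the quantity $Q_b\pmb{v}_0-\pmb{v}_b$ that appears in the stabilizer $s(\pmb{v},\pmb{v})$. Only interior edges enter the left-hand side, since $[\pmb{v}_0]_e$ is defined through the two-element difference and is taken to vanish on $\partial\Omega$ (cf. the convention used for $J_e^0$). So I would fix an interior edge $e=\partial\tau_1\cap\partial\tau_2$ shared by two elements and exploit that the boundary component $\pmb{v}_b$ is single-valued on $e$. Inserting and subtracting $\pmb{v}_b$ splits the jump as
\begin{align*}
[\pmb{v}_0]_e=(\pmb{v}_0|_{\tau_1}-\pmb{v}_b)-(\pmb{v}_0|_{\tau_2}-\pmb{v}_b),
\end{align*}
so that $\|[\pmb{v}_0]\|_e\le\|\pmb{v}_0|_{\tau_1}-\pmb{v}_b\|_e+\|\pmb{v}_0|_{\tau_2}-\pmb{v}_b\|_e$ by the triangle inequality. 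Multiplying by $h_e^{-1}$, summing over $e\in\mathcal{E}_h$, and regrouping by element (each $\tau$ meets a bounded number of edges, with $h_e\simeq h_\tau$ by shape regularity) reduces the claim to $\sum_{\tau}h_\tau^{-1}\|\pmb{v}_0-\pmb{v}_b\|_{\partial\tau}^2\lesssim s(\pmb{v},\pmb{v})$.

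The second step connects $\pmb{v}_0-\pmb{v}_b$ on $\partial\tau$ to the stabilizer term $Q_b\pmb{v}_0-\pmb{v}_b$. Since $\pmb{v}_b\in V_{k-1}(e)$ and $Q_b$ is the $L^2(e)$-projection onto $V_{k-1}(e)$, the difference $Q_b\pmb{v}_0-\pmb{v}_b$ lies in $V_{k-1}(e)$ while $\pmb{v}_0-Q_b\pmb{v}_0$ is $L^2(e)$-orthogonal to it, giving the Pythagorean splitting
\begin{align*}
\|\pmb{v}_0-\pmb{v}_b\|_e^2=\|\pmb{v}_0-Q_b\pmb{v}_0\|_e^2+\|Q_b\pmb{v}_0-\pmb{v}_b\|_e^2.
\end{align*}
After multiplication by $h_\tau^{-1}$ and summation, the second summand is precisely $s(\pmb{v},\pmb{v})$ from (\ref{s}).

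The main obstacle is the first summand $\|\pmb{v}_0-Q_b\pmb{v}_0\|_e$, i.e. the degree-$k$ part of the trace of $\pmb{v}_0$ that the projection onto $V_{k-1}(e)$ discards and that the stabilizer does not see directly. I therefore expect the decisive move to be one of two routes: either (i) apply $Q_b$ before taking norms, so that one in fact bounds the projected jump $Q_b[\pmb{v}_0]_e$, in which case the offending term disappears and the two steps above close the argument using only $\|Q_b w\|_e\le\|w\|_e$, the equivalence $h_e\simeq h_\tau$, and finite edge overlap; or (ii) control $\|\pmb{v}_0-Q_b\pmb{v}_0\|_e$ through a scaled $L^2$-projection/trace estimate together with an inverse inequality on $\tau$, absorbing it into the stabilizer-controlled terms. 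I would pursue route (i) first, since it is the version genuinely bounded by $s(\pmb{v},\pmb{v})$ alone, and it matches how this quantity is subsequently used together with Lemma \ref{lem:weakg-s}.
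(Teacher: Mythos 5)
Your reduction follows the paper's own proof almost step for step: the paper likewise starts from the single-valuedness of $\pmb{v}_b$ (written there as $\|[\pmb{v}_b]\|_e=0$, so that $[\pmb{v}_0]_e=[\pmb{v}_0-\pmb{v}_b]_e$), applies the Cauchy--Schwarz inequality edge by edge to obtain $\|[\pmb{v}_0-\pmb{v}_b]\|_e\le\|\pmb{v}_0-\pmb{v}_b\|_{\partial\tau_1\cap e}+\|\pmb{v}_0-\pmb{v}_b\|_{\partial\tau_2\cap e}$, and then sums over $e\in\mathcal{E}_h$; this is exactly your reduction to the element-wise quantity $\sum_{\tau}h_\tau^{-1}\|\pmb{v}_0-\pmb{v}_b\|_{\partial\tau}^2$. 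The divergence occurs at the final step: the paper simply identifies that sum with $s(\pmb{v},\pmb{v})$ and stops, whereas you correctly point out that the stabilizer in (\ref{s}) is built from $Q_b\pmb{v}_0-\pmb{v}_b$ rather than from $\pmb{v}_0-\pmb{v}_b$, so the orthogonal splitting leaves over the term $\sum_{\tau}h_\tau^{-1}\|\pmb{v}_0-Q_b\pmb{v}_0\|_{\partial\tau}^2$. In other words, the obstacle you isolate is one the paper passes over silently rather than one it resolves.

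That said, your proposal does not close this gap either, and as written neither of your two routes would. Route (i) proves a different statement: inserting $Q_b$ before taking norms controls $\sum_e h_e^{-1}\|Q_b[\pmb{v}_0]\|_e^2$, not $\sum_e h_e^{-1}\|[\pmb{v}_0]\|_e^2$, and since the trace of $[P_k(\tau)]^d$ on an edge is not contained in $V_{k-1}(e)=[P_{k-1}(e)]^d+P_{RM}(e)$, the discarded component $[\pmb{v}_0]-Q_b[\pmb{v}_0]$ is genuinely nonzero in general. Route (ii) requires $\sum_{\tau}h_\tau^{-1}\|\pmb{v}_0-Q_b\pmb{v}_0\|_{\partial\tau}^2\lesssim s(\pmb{v},\pmb{v})$, which does not follow from trace and inverse inequalities alone: those only relate $\|\pmb{v}_0-Q_b\pmb{v}_0\|_{\partial\tau}$ back to norms of $\pmb{v}_0$ on $\tau$, and there is no reason for that to be small when $s(\pmb{v},\pmb{v})$ is. Indeed, choosing a discontinuous piecewise degree-$k$ field $\pmb{v}_0$ whose interelement jumps are $L^2(e)$-orthogonal to $V_{k-1}(e)$ and setting $\pmb{v}_b=Q_b\pmb{v}_0$ makes the right-hand side of (\ref{equ:J_3s}) vanish while the left-hand side need not. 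So an additional ingredient specific to these spaces (or a reformulation of the stabilizer, or of the lemma, in terms of $Q_b[\pmb{v}_0]$) is required; neither your sketch nor the paper supplies it, and up to that unresolved point your argument and the paper's coincide.
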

\begin{proof}
Noting that  $\|[\pmb{v}_b]\|_e=0, \forall e\in \mathcal{E}_h$, we obtain
\begin{eqnarray}\label{equ:J_3s-1}
\sum_{e\in\mathcal{E}_h}h_e^{-1}\|[\pmb{v}_0]\|_e^2
=\sum_{e\in\mathcal{E}_h}h_e^{-1}\|[\pmb{v}_0-\pmb{v}_b]\|_e^2,
\end{eqnarray}	

For any edge or face  $e\in\mathcal{E}_h^0$, there exists $\tau_1\in \mathcal{T}_h$ and $\tau_2\in \mathcal{T}_h$, such that $e=\partial \tau_1\cap\partial\tau_2$. By using of  Cauchy-Schwarz inequality, we get
\begin{eqnarray*}
\lefteqn{\|[ \pmb{v}_0-\pmb{v}_b] \|_e^2}\\
&&=\langle[\pmb{v}_0-\pmb{v}_b],\pmb{v}_0-\pmb{v}_b\rangle_{\partial \tau_1\cap e} + \langle[\pmb{v}_0-\pmb{v}_b],\pmb{v}_0-\pmb{v}_b\rangle_{\partial \tau_2\cap e}\\
&&\leqslant \|[ \pmb{v}_0-\pmb{v}_b]  \|_e(\| \pmb{v}_0-\pmb{v}_b \|_{\partial \tau_1\cap e} +\| \pmb{v}_0-\pmb{v}_b\|_{\partial \tau_2\cap e});
\end{eqnarray*}	
 
Similarly,   for any boundary edge or face $e\in\mathcal{E}_h^\partial$,  
a similar conclusion can be proved . We now sum over  $e\in\mathcal{E}_h$ and the following estimate is true 
\begin{eqnarray*}
\sum_{e\in\mathcal{E}_h} h_e^{-1}\|[\pmb{v}_0-\pmb{v}_b] \|_e^2\lesssim \sum_{e\in\mathcal{E}_h} h_e^{-1/2}\|[\pmb{v}_0-\pmb{v}_b] \|_e \cdot (s(\pmb{v}, \pmb{v}))^{1/2},
\end{eqnarray*}
that is
\begin{eqnarray}\label{equ:J_3s-2}
\sum_{e\in\mathcal{E}_h} h_e^{-1}\|[\pmb{v}_0-\pmb{v}_b] \|_e^2\lesssim s(\pmb{v}, \pmb{v}).
\end{eqnarray}

Combine \eqref{equ:J_3s-1} and \eqref{equ:J_3s-2}, we obtain the conclusion (\ref{lem:J_3s}).
\end{proof}
 
Let $\tilde{\mathcal{V}}_{h}=\{\pmb{v}, \pmb{v}\in [P_0(\tau)]^d, \forall \tau\in \mathcal{T}_h\}$, $\mathcal{V}^c_h=  (H_0^1(\Omega))^d\cap (P_1(\tau))^2$,  we introduce the following estimats by  refer to \cite{BonitoNochetto10:734}.
\begin{lemma}\label{lem:Interpolation}
For any $\tau\in \mathcal{T}_h$, there exists an interpolation operator $I_{\mathcal{T}_h}^c: \tilde{\mathcal{V}}_{h}\rightarrow \mathcal{V}^c_h$, such that  
\begin{equation}\label{InterpolationEstimate1}
\|v_{\mathcal{T}_h} - I_{\mathcal{T}}^c v_{\mathcal{T}_h}\|_{\mathcal{T}_h}\lesssim h\|\nabla v_{\mathcal{T}_h}\|_{L^2(\Omega)}, \forall v_{\mathcal{T}_h}\in H_0^1(\Omega),
\end{equation}
where the constant is  only dependent on the shape regular of mesh $\mathcal{T}_h$. 
For any $|a| = 0, 1$,  we have
\begin{equation}\label{InterpolationEstimate}
\|D^a(v_{\mathcal{T}} - I_{\mathcal{T}}^c v_{\mathcal{T}_h})\|_{\mathcal{T}}^2\lesssim \sum_{e\in \mathcal{E}_\mathcal{T}}h_\tau^{1-2|a|}\|[ v_{\mathcal{T}}]_e\|^2_e, \forall v_{\mathcal{T}}\in \tilde{\mathcal{V}}_{h}.
\end{equation}
where the constant is  only dependent on the shape regular of mesh $\mathcal{T}_h$.
\end{lemma}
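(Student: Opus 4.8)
The plan is to realize $I^c_{\mathcal{T}_h}$ as an Oswald-type averaging (enrichment) operator and to reduce both estimates to a local scaling argument combined with a path argument that rewrites each nodal discrepancy as a sum of edge (face) jumps. Concretely, I would fix the Lagrange nodal basis $\{\phi_z\}_{z\in\mathcal{N}_h}$ of $\mathcal{V}^c_h$ indexed by the vertices $\mathcal{N}_h$ of $\mathcal{T}_h$, and define, for $v\in\tilde{\mathcal{V}}_h$,
\[
I^c_{\mathcal{T}_h}v=\sum_{z\in\mathcal{N}_h^0}\Bigl(\tfrac{1}{\#\omega_z}\sum_{\tau\in\omega_z}v|_\tau\Bigr)\phi_z,
\]
where $\omega_z=\{\tau\in\mathcal{T}_h:z\in\tau\}$ is the vertex patch and $\mathcal{N}_h^0$ are the interior vertices; boundary vertices are assigned the value $0$, which forces $I^c_{\mathcal{T}_h}v\in(H^1_0(\Omega))^d$ and hence lands in $\mathcal{V}^c_h$.

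Next, on a fixed element $\tau$ the difference $(v-I^c_{\mathcal{T}_h}v)|_\tau$ is a vector-valued affine function whose value at each vertex $z\in\mathcal{N}(\tau)$ is exactly $v|_\tau-(I^c_{\mathcal{T}_h}v)(z)$. Transporting the norm equivalence on the finite-dimensional space $[P_1(\hat\tau)]^d$ to $\tau$ by the standard affine scaling (the inverse power coming from differentiating an affine function), I would obtain
\[
\|D^a(v-I^c_{\mathcal{T}_h}v)\|_\tau^2\lesssim h_\tau^{d-2|a|}\sum_{z\in\mathcal{N}(\tau)}\bigl|v|_\tau-(I^c_{\mathcal{T}_h}v)(z)\bigr|^2,\qquad |a|=0,1 .
\]
The crucial step is then to control each nodal discrepancy by jumps. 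Since the averaging weights sum to one, $v|_\tau-(I^c_{\mathcal{T}_h}v)(z)=(\#\omega_z)^{-1}\sum_{\tau'\in\omega_z}(v|_\tau-v|_{\tau'})$, and each difference $v|_\tau-v|_{\tau'}$ telescopes along a chain of face-adjacent elements inside $\omega_z$ into a signed sum of jumps $[v]_e$ over interior faces of the patch (for a boundary node the chain terminates at $\partial\Omega$, where the prescribed value $0$ plays the role of the jump). Because the jump of a piecewise constant is constant on each face, $|[v]_e|^2\sim h_e^{-(d-1)}\|[v]_e\|_e^2\sim h_\tau^{-(d-1)}\|[v]_e\|_e^2$, and shape regularity bounds both $\#\omega_z$ and the length of every chain by a constant. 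Collecting the three estimates produces precisely the weight $h_\tau^{d-2|a|}\cdot h_\tau^{-(d-1)}=h_\tau^{1-2|a|}$ of \eqref{InterpolationEstimate}.

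For the global bound \eqref{InterpolationEstimate1}, valid for genuine $v_{\mathcal{T}_h}\in(H^1_0(\Omega))^d$, I would not reprove anything from scratch: it is the classical Cl\'ement/Scott--Zhang quasi-interpolation estimate obtained by the same averaging construction but with local $L^2$-projections on patches together with Poincar\'e's inequality, and I would cite it directly from \cite{BonitoNochetto10:734}. The main obstacle is the telescoping/path argument in the second estimate: one must verify that inside each vertex patch any two elements are joined by a chain of face-neighbours of uniformly bounded length, so that the number of jump contributions and all combinatorial constants remain independent of $h$, and that the boundary vertices are treated consistently so the homogeneous trace is recovered without spurious terms. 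Once this geometric bookkeeping (which rests entirely on shape regularity) is in place, the remaining estimates are routine scalings.
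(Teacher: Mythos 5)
The paper does not actually prove this lemma: it is imported verbatim with a citation to Bonito--Nochetto \cite{BonitoNochetto10:734}, so there is no in-paper argument to compare against. Your proposal reconstructs the standard proof that underlies that citation (the Oswald/averaging enrichment operator, the affine scaling equivalence $\|D^a w\|_\tau^2\lesssim h_\tau^{d-2|a|}\sum_{z}|w(z)|^2$, and the telescoping of nodal discrepancies into face jumps within each vertex patch), and the exponent bookkeeping $h_\tau^{d-2|a|}\cdot h_\tau^{-(d-1)}=h_\tau^{1-2|a|}$ is correct; the treatment of boundary vertices via the homogeneous value $0$ is also the right way to recover the jump convention on $\partial\Omega$. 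Two minor caveats worth recording: (i) the telescoping chains live in the whole vertex patch $\omega_z$, so the local bound you actually obtain has the jump sum running over all faces meeting $\overline{\tau}$ (a neighbourhood of $\tau$), not only the faces of $\tau$ itself --- this is also how the estimate is stated in the cited reference, and the paper's notation $\mathcal{E}_{\mathcal{T}}$ should be read that way; (ii) the first estimate \eqref{InterpolationEstimate1} as written applies $I_{\mathcal{T}}^c$ to a general $H_0^1$ function, which your piecewise-constant averaging operator does not accept --- your decision to read it as the classical Cl\'ement/Scott--Zhang bound and cite it separately is the only sensible repair of what is really a typo in the statement. With those readings, your argument is complete and matches the source the paper relies on.
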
 
 
Now, we shall present the reliability for the error estimator defined in  (\ref{eta:1}) by the following upper bound estimate.
\begin{theorem}\label{lem:7}
Let  $\pmb{u}$  be the solution of (\ref{ela-Model:1}) and  $\pmb{u}_h=\{\pmb{u}_0^h,\pmb{u}_b^h\}\in  \mathcal{V}_h$ be the solution of (\ref{Dis-Weak-Gal}), respectively.  There exists a constant $C_1>0$, such that such 
\begin{align}\label{lemma7_1} 
 \|\mu^{1/2}( \nabla\pmb{u}- \nabla_{w}\pmb{u}_h)\|^2_{\mathcal{T}_h}+
\|(\mu+\lambda)^{1/2} (\nabla\cdot\pmb{u}- \nabla_{w}\cdot\pmb{u}_h)\|^2_{\mathcal{T}_h} \leq  C_1^2\eta^2(\pmb{u}_h, \mathcal{T}_h),
 \end{align}
when the constant $C_1>0$, only depends on the shape regularity of $\mathcal{T}_h$.
\end{theorem}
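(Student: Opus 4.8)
The plan is to bound the left-hand side, which is the broken energy norm of the error $\pmb{u}-\pmb{u}_h$, by splitting it into a conforming part and a nonconforming (stabilizer) part. First I would introduce the conforming approximation $\pmb{u}_h^c := I_{\mathcal{T}_h}^c\pmb{u}_0 \in (H_0^1(\Omega))^d$ supplied by Lemma \ref{lem:Interpolation}, and use the triangle inequality to write $\nabla\pmb{u}-\nabla_w\pmb{u}_h = (\nabla\pmb{u}-\nabla\pmb{u}_h^c) + (\nabla\pmb{u}_h^c-\nabla\pmb{u}_0) + (\nabla\pmb{u}_0-\nabla_w\pmb{u}_h)$, together with the analogous splitting of the divergence. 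The last term is controlled directly by $s(\pmb{u}_h,\pmb{u}_h)$ through Lemma \ref{lem:weakg-s}, while the middle term is controlled by combining the interpolation estimate \eqref{InterpolationEstimate} with the jump bound of Lemma \ref{lem:J_3s}, since $\|\nabla(\pmb{u}_0-\pmb{u}_h^c)\|^2_{\mathcal{T}_h}\lesssim \sum_{e\in\mathcal{E}_h} h_e^{-1}\|[\pmb{u}_0]\|_e^2 \lesssim s(\pmb{u}_h,\pmb{u}_h)$. This reduces the problem to estimating the conforming error $\pmb{e}^c:=\pmb{u}-\pmb{u}_h^c\in (H_0^1(\Omega))^d$.

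For the conforming error I would test the energy identity with $\pmb{e}^c$ itself. Writing $A(\pmb{w},\pmb{v}):=\mu(\nabla\pmb{w},\nabla\pmb{v})+(\mu+\lambda)(\nabla\cdot\pmb{w},\nabla\cdot\pmb{v})$ and using the continuous weak form $A(\pmb{u},\pmb{e}^c)=(\pmb{f},\pmb{e}^c)$, valid because $\pmb{e}^c\in (H_0^1(\Omega))^d$, I obtain
\[
A(\pmb{e}^c,\pmb{e}^c)=(\pmb{f},\pmb{e}^c)-\mu(\nabla\pmb{u}_h^c,\nabla\pmb{e}^c)-(\mu+\lambda)(\nabla\cdot\pmb{u}_h^c,\nabla\cdot\pmb{e}^c).
\]
Next I would replace $\nabla\pmb{u}_h^c$ by the weak gradient $\nabla_w\pmb{u}_h$ and $\nabla\cdot\pmb{u}_h^c$ by $\nabla_w\cdot\pmb{u}_h$; the resulting correction terms are again absorbed into $s(\pmb{u}_h,\pmb{u}_h)^{1/2}$ by Lemmas \ref{lem:weakg-s}--\ref{lem:Interpolation} and Cauchy--Schwarz. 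An element-wise integration by parts then converts the remaining expression into the volume residual $\pmb{f}+\nabla\cdot(\mu\nabla_w\pmb{u}_h)+\nabla((\mu+\lambda)\nabla_w\cdot\pmb{u}_h)$ tested against $\pmb{e}^c$ on each $\tau$, plus interface contributions that, since $\pmb{e}^c$ is single-valued, assemble into the jumps $J_e(\mu\nabla_w\pmb{u}_h+(\mu+\lambda)(\nabla_w\cdot\pmb{u}_h)\pmb{I})$, exactly matching $\eta_c$ and $\eta_{nc}$.

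To produce the mesh-size weights $h_\tau$ and $h_e$ I would invoke the Galerkin orthogonality \eqref{Dis-Weak-Gal}: subtracting a weak-Galerkin quasi-interpolant $I_h\pmb{e}^c$ of $\pmb{e}^c$ from the test function and replacing $\pmb{f}$ by its projection $\pmb{f}_h$, the volume term splits into the residual weighted by $h_\tau(\mu^{-1}+(\mu+\lambda)^{-1})^{1/2}$ and the oscillation $\mathrm{osc}(\pmb{f},\tau)$. Applying Cauchy--Schwarz element-by-element and edge-by-edge, then the interpolation and trace estimates $\|\pmb{e}^c-I_h\pmb{e}^c\|_\tau\lesssim h_\tau\|\nabla\pmb{e}^c\|_{\omega_\tau}$ and $\|\pmb{e}^c-I_h\pmb{e}^c\|_e\lesssim h_e^{1/2}\|\nabla\pmb{e}^c\|_{\omega_e}$, bounds each contribution by the corresponding estimator term times $A(\pmb{e}^c,\pmb{e}^c)^{1/2}$. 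Dividing through by $A(\pmb{e}^c,\pmb{e}^c)^{1/2}$ and combining with the stabilizer bounds from the first paragraph yields \eqref{lemma7_1}.

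I expect the \textbf{main obstacle} to be the bookkeeping of the two weights $\mu$ and $\mu+\lambda$ throughout the second and third steps: the Cauchy--Schwarz splittings must be arranged so that precisely the factors $(\mu^{-1}+(\mu+\lambda)^{-1})$ and $\mu^{-1}$ occurring in $\eta_c^2$, $\eta_{nc}^2$ and $\mathrm{osc}^2$ are generated, and so that the final constant $C_1$ stays independent of $\lambda$. This forces use of the $\lambda$-independent $H^2$-regularity assumed in the introduction when handling the divergence contribution, and is the delicate point that separates the elasticity estimate from the scalar second-order case.
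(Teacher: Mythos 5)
Your proposal follows essentially the same route as the paper's proof: both introduce the conforming interpolant $I_{\mathcal{T}_h}^c\pmb{u}_0^h$ of Lemma \ref{lem:Interpolation}, control the nonconforming pieces by the stabilizer via Lemmas \ref{lem:weakg-s} and \ref{lem:J_3s}, and treat the conforming error by Galerkin orthogonality with a quasi-interpolant plus element-wise integration by parts, producing exactly the volume residual, the jump terms and the oscillation. The differences are purely organizational (the paper splits the energy inner product into $I_1+I_2$ and closes with Young's inequality instead of testing $A(\pmb{e}^c,\pmb{e}^c)$ directly), and the $\lambda$-independent $H^2$-regularity you flag as the delicate point is in fact not needed for this energy-norm reliability bound and is not used in the paper's argument.
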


\begin{proof}
Let $\pmb{e}_1=\mu(\nabla \pmb{u}-\nabla_w\pmb{u}_h)$, $e_2=(\mu+\lambda)(\nabla\cdot \pmb{u}-\nabla_w\cdot\pmb{u}_h)$.  By Lemma  \ref{lem:Interpolation},  we have  $\pmb{u}_h^c=I_{\mathcal{T}_h}^c\pmb{u}_0^h\in\mathcal{V}^c(\mathcal{T}_h)$ and
 \begin{align}\label{lemma7_2} 
&\nabla_w\pmb{u}_h^c=\nabla\pmb{u}_h^c,  \\\label{lemma7_3} 
 &\nabla_w\cdot\pmb{u}_h^c=\nabla\cdot\pmb{u}_h^c.
\end{align}

By using the above notations, the following estimate  is ture 
\begin{align}\nonumber
 E_h^2&=\|\mu^{1/2}( \nabla\pmb{u}- \nabla_{w}\pmb{u}_h)\|^2_{\tau}+
\|(\mu+\lambda)^{1/2} (\nabla\cdot\pmb{u}- \nabla_{w}\cdot\pmb{u}_h)\|^2_{\tau}\\ \nonumber
&=(\pmb{e}_1, \nabla\pmb{u}- \nabla_{w}\pmb{u}_h)_{\tau}+ 
(e_2, \nabla\cdot\pmb{u}- \nabla_{w}\cdot\pmb{u}_h)_{\tau}  \\\nonumber
&=(\pmb{e}_1, \nabla\pmb{u}- \nabla\pmb{u}_h^c)_{\tau}
+ (\pmb{e}_1, \nabla\pmb{u}_h^c- \nabla_{w}\pmb{u}_h)_{\tau}
+(e_2, \nabla\cdot\pmb{u}- \nabla\cdot\pmb{u}_h^c)_{\tau} 
+(e_2, \nabla\cdot\pmb{u}_h^c- \nabla_{w}\cdot\pmb{u}_h)_{\tau}  \\\label{lemma7_3} 
&=I_1+I_2,
 \end{align}
where $I_1=(\pmb{e}_1, \nabla\pmb{u}- \nabla\pmb{u}_h^c)_{\tau}+(e_2, \nabla\cdot\pmb{u}- \nabla\cdot\pmb{u}_h^c)_{\tau} $, $I_2=(\pmb{e}_1, \nabla\pmb{u}_h^c- \nabla_{w}\pmb{u}_h)_{\tau}+(e_2, \nabla\cdot\pmb{u}_h^c- \nabla_{w}\cdot\pmb{u}_h)_{\tau}$.

Firstly, we shall estimate $I_1$.
Let $\pmb{w}=\pmb{u}-\pmb{u}_h^c\in (H_0^1(\Omega))^2$, by refer to\cite{Chen2014problems}, we know that there exists an interpolation operator  $\pmb{w}_h$ which satisfies
 \begin{align}\label{lemma7_4} 
(\pmb{e}_1, \nabla\pmb{w}_h)_{\tau}+(e_2, \nabla\cdot\pmb{w}_h)_{\tau}=\pmb{0}.
 \end{align}
 
 Using (\ref{lemma7_4}), Green formula, the continuity of $\nabla\pmb{u}$ and $\nabla\cdot\pmb{u}$ on the edge or face of the unit $\tau$,  the estimate in \cite{Chen2014problems},  we obtain
 \begin{align}\nonumber
 I_1&=(\pmb{e}_1, \nabla\pmb{u}- \nabla\pmb{u}_h^c)_{\tau}+(e_2, \nabla\cdot\pmb{u}- \nabla_{w}\cdot\pmb{u}_h^c)_{\tau}\\ \nonumber
 &=(\pmb{e}_1, \nabla\pmb{w})_{\tau}+(e_2, \nabla\cdot\pmb{w})_{\tau}\\ \nonumber
&=(\pmb{e}_1, \nabla(\pmb{w}-\pmb{w}_h))_{\tau}+(e_2, \nabla\cdot(\pmb{w}-\pmb{w}_h))_{\tau}\\ \nonumber
&=-(\nabla\cdot\pmb{e}_1, \pmb{w}-\pmb{w}_h)_{\tau}+(\pmb{e}_1 \pmb{n}, \pmb{w}-\pmb{w}_h)_{\partial\tau}-(\nabla e_2, \pmb{w}-\pmb{w}_h)_{\tau}+(e_2, (\pmb{w}-\pmb{w}_h)\pmb{n})_{\partial\tau}\\ \nonumber
&=-(\nabla\cdot\pmb{e}_1+\nabla e_2, \pmb{w}-\pmb{w}_h)_{\tau}+(\pmb{e}_1\pmb{n}, \pmb{w}-\pmb{w}_h)_{\partial\tau}+(e_2, (\pmb{w}-\pmb{w}_h)\cdot\pmb{n})_{\partial\tau}\\ \nonumber
&=(\pmb{f}+\nabla\cdot(\mu\nabla_w\pmb{u}_h)+\nabla((\mu+\lambda)\nabla_w\cdot\pmb{u}_h), \pmb{w}-\pmb{w}_h)_{\tau}+(\mu\nabla_w\pmb{u}_h\pmb{n}, \pmb{w}-\pmb{w}_h)_{\partial\tau}\\ \nonumber
&~~~+(((\mu+\lambda)\nabla_w\cdot\pmb{u}_h)\pmb{n}, \pmb{w}-\pmb{w}_h)_{\partial\tau}\\ \nonumber
&=(\pmb{f}+\nabla\cdot(\mu\nabla_w\pmb{u}_h+(\mu+\lambda)\nabla_w\cdot\pmb{u}_h\pmb{I}), \pmb{w}-\pmb{w}_h)_{\tau}\\ \nonumber
&~~~+((\mu\nabla_w\pmb{u}_h +(\mu+\lambda)\nabla_w\cdot\pmb{u}_h\pmb{I})\pmb{n}, \pmb{w}-\pmb{w}_h)_{\partial\tau}\\ \nonumber
&\lesssim\|\pmb{f}+\nabla\cdot(\mu\nabla_w\pmb{u}_h+(\mu+\lambda)\nabla_w\cdot\pmb{u}_h\pmb{I})\|_{\tau}
\|\pmb{w}-\pmb{w}_h\|_{\tau}  \\ \nonumber
&~~~+\|[(\mu\nabla_w\pmb{u}_h +(\mu+\lambda)\nabla_w\cdot\pmb{u}_h\pmb{I})\pmb{n}]\|_{\partial\tau}\|\pmb{w}-\pmb{w}_h\|_{\partial\tau}\\ \nonumber
&\lesssim\|\pmb{f}+\nabla\cdot(\mu\nabla_w\pmb{u}_h+(\mu+\lambda)\nabla_w\cdot\pmb{u}_h\pmb{I})\|_{\tau}h_{\tau}\|\nabla_w \pmb{w} \|_{\tau}\\\nonumber 
&~~~+(\|[(\mu\nabla_w\pmb{u}_h +(\mu+\lambda)\nabla_w\cdot\pmb{u}_h\pmb{I})\pmb{n}]\|_{\partial\tau}) h_{e}^{\frac{1}{2}}\|\nabla_w  \pmb{w}\|_{\tau}\\   \label{lemma7_5} 
&\lesssim  \eta(\pmb{u}_h, \mathcal{T}_h)\|\nabla\pmb{u}-\nabla\pmb{u}_h^c\|_{\tau}.
 \end{align}
 
By using the Lemma  \ref{lem:weakg-s} and  Lemma\ref{lem:J_3s}, we shall estimate $\|\nabla\pmb{u}-\nabla\pmb{u}_h^c\|_{\tau}$.
\begin{align}\nonumber
\|\nabla\pmb{u}-\nabla\pmb{u}_h^c\|_{\tau}&\leq \|\nabla\pmb{u}-\nabla_w\pmb{u}_h\|_{\tau}
+\|\nabla_w\pmb{u}_h-\nabla_w\pmb{u}_0^h\|_{\tau}+\|\nabla_w\pmb{u}_0^h-\nabla\pmb{u}_h^c\|_{\tau}\\ \nonumber
&\leq \|\nabla\pmb{u}-\nabla_w\pmb{u}_h\|_{\tau}
+\|\nabla_w\pmb{u}_h-\nabla\pmb{u}_0^h\|_{\tau}+\|\nabla\pmb{u}_0^h-\nabla\pmb{u}_h^c\|_{\tau}\\ \nonumber
&\lesssim \|\nabla\pmb{u}-\nabla_w\pmb{u}_h\|_{\tau}
+s(\pmb{u}_h, \pmb{u}_h)+h_e^{-\frac{1}{2}}\|[\pmb{u}_0^h]\|_{e}\\   \label{lemma_eta_7_5} 
& \lesssim E_h+s(\pmb{u}_h, \pmb{u}_h)\lesssim E_h+ \eta^2(\pmb{u}_h, \mathcal{T}_h),
\end{align}
 
Combine (\ref{lemma7_5} ) and (\ref{lemma_eta_7_5} ), we obtain
\begin{align}  \label{lemma7_6}
I_1\lesssim  \eta(\pmb{u}_h, \mathcal{T}_h) E_h + \eta^2(\pmb{u}_h, \mathcal{T}_h).
\end{align}

Secondly, we shall estimate   $I_2$.  By using the Lemma \ref{lem:weakg-s} and Lemma \ref{lem:J_3s}, the relationship between the weak gradient and classical gradient, the norm of the gradient is less than the norm of divergence, the relationship between the weak divergence and classical divergence, we get
\begin{align}\nonumber
I_2&=(\pmb{e}_1, \nabla\pmb{u}_h^c- \nabla_{w}\pmb{u}_h)_{\tau}+(e_2, \nabla\cdot\pmb{u}_h^c- \nabla_{w}\cdot\pmb{u}_h)_{\tau} \\ \nonumber
&=(\pmb{e}_1, \nabla\pmb{u}_h^c- \nabla\pmb{u}_0^h)_{\tau}+(\pmb{e}_1, \nabla\pmb{u}_0^h- \nabla_{w}\pmb{u}_h)_{\tau}\\ \nonumber
&+(e_2, \nabla\cdot\pmb{u}_h^c- \nabla\cdot\pmb{u}_0^h)_{\tau}+(e_2, \nabla\cdot\pmb{u}_0^h- \nabla_{w}\cdot\pmb{u}_h)_{\tau} \\ \nonumber
&
\leq \|\pmb{e}_1\|_{\tau}s(\pmb{u}_h, \pmb{u}_h)+\|\pmb{e}_1\|_{\tau}s(\pmb{u}_h, \pmb{u}_h)+\|e_2\|_{\tau}\|\nabla\pmb{u}_h^c-\nabla_w\pmb{u}_0^h\|_{\tau}+\|e_2\|_{\tau}s(\pmb{u}_h, \pmb{u}_h) \\  \label{lemma7_7}
&\lesssim E_h s(\pmb{u}_h, \pmb{u}_h) \leq E_h \eta(\pmb{u}_h, \mathcal{T}_h).
\end{align}

Combine (\ref{lemma7_6}) and (\ref{lemma7_7}), we have 
\begin{align}\nonumber
E_h^2=I_1+I_2  \lesssim  E_h \eta(\pmb{u}_h, \mathcal{T}_h)+\eta^2(\pmb{u}_h, \mathcal{T}_h)+E_h \eta(\pmb{u}_h, \mathcal{T}_h).
\end{align}

Using the inequality $2ab\leq \varepsilon a^2+\frac{1}{\varepsilon}b^2$, mergeing items with the same form,  we  have completed the proof. 

\end{proof}


Next,  we shall use the standard bubble function technique to prove the efficiency estimate(see \cite{Chen2014problems}). Let  $\omega_e=\tau_1\cup \tau_2$, where $\tau_1$ and $\tau_2$ share the edge or face $e$. We present the following lemma.
\begin{lemma}
There exists a constant $C>0$,  such that
\begin{align}\nonumber
&h_{\tau}\|\pmb{f}+\nabla\cdot(\mu\nabla_w\pmb{v}_h)+\nabla((\mu+\lambda)\nabla_w\cdot\pmb{v}_h)\|_{\tau}\\ \label{LocalLowerBou:1}
&\leq C(
\|\mu^{1/2}( \nabla\pmb{u}- \nabla_w\pmb{u}_h)\|_{\tau}+
\|(\mu+\lambda)^{1/2} (\nabla\cdot\pmb{u}- \nabla_w\cdot\pmb{u}_h)\|_{\tau}+h_{\tau}\|\pmb{f}-\pmb{f}_h\|_{\tau}).\\  \nonumber
&h_e^{1/2}\|[(\mu \nabla_w\pmb{v}_h+(\mu+\lambda)(\nabla_w\cdot\pmb{v}_h )\pmb{I})\pmb{n}]\|_e\\ \label{LocalLowerBou:2} 
&\leq C 
 \left(\mathrm{osc}(\pmb{f}, \omega_e) +\|\mu^{1/2}(\nabla\pmb{u}-\nabla_w\pmb{u}_h)\|_{\omega_e} +\|(\mu+\lambda)^{1/2}(\nabla\cdot\pmb{u}-\nabla_w\cdot\pmb{u}_h)\|_{\omega_e}\right).
\end{align}

\end{lemma}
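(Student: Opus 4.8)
The plan is to use the standard bubble-function technique (cf. \cite{Chen2014problems}), establishing the element-residual bound \eqref{LocalLowerBou:1} first, since it is reused inside the proof of the jump bound \eqref{LocalLowerBou:2}. The key algebraic device throughout is to rewrite the residual through the model problem: introducing the discrete stress $\pmb{A}=\mu\nabla_w\pmb{u}_h+(\mu+\lambda)(\nabla_w\cdot\pmb{u}_h)\pmb{I}$ and the exact stress $\pmb{A}^{\ast}=\mu\nabla\pmb{u}+(\mu+\lambda)(\nabla\cdot\pmb{u})\pmb{I}$, equation \eqref{ela-Model:1} reads $\pmb{f}=-\nabla\cdot\pmb{A}^{\ast}$. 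For \eqref{LocalLowerBou:1} let $b_\tau$ be the interior bubble on $\tau$ (vanishing on $\partial\tau$) and $R_\tau=\pmb{f}_h+\nabla\cdot(\mu\nabla_w\pmb{u}_h)+\nabla((\mu+\lambda)\nabla_w\cdot\pmb{u}_h)=\pmb{f}_h+\nabla\cdot\pmb{A}$ the fully discrete (polynomial) residual. Taking the test function $\pmb{w}_\tau=b_\tau R_\tau$, the norm equivalence on the finite-dimensional polynomial space gives $\|R_\tau\|_\tau^2\lesssim(R_\tau,\pmb{w}_\tau)_\tau$ and $\|\pmb{w}_\tau\|_\tau\lesssim\|R_\tau\|_\tau$, while the rewrite above yields $R_\tau=\nabla\cdot(\pmb{A}-\pmb{A}^{\ast})-(\pmb{f}-\pmb{f}_h)$ on $\tau$.

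Next I would integrate by parts in $(R_\tau,\pmb{w}_\tau)_\tau$. Since $\pmb{w}_\tau$ vanishes on $\partial\tau$ the boundary terms disappear, and using $\pmb{I}:\nabla\pmb{w}_\tau=\nabla\cdot\pmb{w}_\tau$ one obtains
\[
(\nabla\cdot(\pmb{A}-\pmb{A}^{\ast}),\pmb{w}_\tau)_\tau=-(\mu(\nabla_w\pmb{u}_h-\nabla\pmb{u}),\nabla\pmb{w}_\tau)_\tau-((\mu+\lambda)(\nabla_w\cdot\pmb{u}_h-\nabla\cdot\pmb{u}),\nabla\cdot\pmb{w}_\tau)_\tau,
\]
the remaining term $(\pmb{f}-\pmb{f}_h,\pmb{w}_\tau)_\tau$ contributing the oscillation. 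Applying Cauchy--Schwarz, the inverse inequality $\|\nabla\pmb{w}_\tau\|_\tau\lesssim h_\tau^{-1}\|\pmb{w}_\tau\|_\tau$, and $\|\pmb{w}_\tau\|_\tau\lesssim\|R_\tau\|_\tau$, then dividing by $\|R_\tau\|_\tau$ and multiplying by $h_\tau$, bounds $h_\tau\|R_\tau\|_\tau$ by the energy errors plus $h_\tau\|\pmb{f}-\pmb{f}_h\|_\tau$; the triangle inequality $h_\tau\|\pmb{f}+\nabla\cdot\pmb{A}\|_\tau\le h_\tau\|R_\tau\|_\tau+h_\tau\|\pmb{f}-\pmb{f}_h\|_\tau$ finishes \eqref{LocalLowerBou:1}.

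For the jump estimate \eqref{LocalLowerBou:2} I would repeat the argument with the face/edge bubble $b_e$ supported on $\omega_e=\tau_1\cup\tau_2$. Writing $\pmb{J}=[(\mu\nabla_w\pmb{u}_h+(\mu+\lambda)(\nabla_w\cdot\pmb{u}_h)\pmb{I})\pmb{n}]_e=[\pmb{A}\pmb{n}]_e$ and using that the exact stress is normal-continuous, $[\pmb{A}^{\ast}\pmb{n}]_e=0$ (since $\nabla\cdot\pmb{A}^{\ast}=-\pmb{f}\in[L^2]^d$), I take $\pmb{w}_e=b_e\,E(\pmb{J})$ with $E(\pmb{J})$ a polynomial extension of $\pmb{J}$ to $\omega_e$, so that $\pmb{w}_e$ vanishes on $\partial\omega_e$. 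Norm equivalence on $e$ gives $\|\pmb{J}\|_e^2\lesssim(\pmb{J},\pmb{w}_e)_e$, and integrating by parts on each $\tau_i$ converts $(\pmb{J},\pmb{w}_e)_e$ into element-residual terms $(\nabla\cdot(\pmb{A}-\pmb{A}^{\ast}),\pmb{w}_e)_{\tau_i}$ and stress-error terms $(\pmb{A}-\pmb{A}^{\ast},\nabla\pmb{w}_e)_{\tau_i}$. With the scaled estimates $\|\pmb{w}_e\|_{\omega_e}\lesssim h_e^{1/2}\|\pmb{J}\|_e$ and $\|\nabla\pmb{w}_e\|_{\omega_e}\lesssim h_e^{-1/2}\|\pmb{J}\|_e$ for the face bubble, together with the already-proven bound \eqref{LocalLowerBou:1} for the element residuals on $\tau_1,\tau_2$, everything collects into $\mathrm{osc}(\pmb{f},\omega_e)$ and the energy errors over $\omega_e$; dividing by $\|\pmb{J}\|_e$ and multiplying by $h_e^{1/2}$ completes the proof.

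The main obstacle I anticipate is the bookkeeping of the material-parameter weights. The integration by parts naturally pairs the full coefficients $\mu$ and $\mu+\lambda$ against $\nabla\pmb{w}$ and $\nabla\cdot\pmb{w}$, whereas the right-hand sides carry only the square-root weights $\mu^{1/2}$ and $(\mu+\lambda)^{1/2}$; factoring $\|\mu(\nabla_w\pmb{u}_h-\nabla\pmb{u})\|_\tau=\mu^{1/2}\|\mu^{1/2}(\nabla_w\pmb{u}_h-\nabla\pmb{u})\|_\tau$ (and likewise for the divergence term) leaves residual prefactors $\mu^{1/2}$ and $(\mu+\lambda)^{1/2}$. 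Matching these to the stated inequalities forces the constant $C$ to absorb the Lamé constants, so the delicate point is to decide whether one is content with $C=C(\mu,\lambda)$ or must work harder for robustness as $\lambda\to\infty$. A secondary technical point is the construction of the extension $E(\pmb{J})$ and the correctly scaled trace and inverse inequalities for the edge bubble ($d=2$) versus the face bubble ($d=3$).
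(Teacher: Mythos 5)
Your proposal is correct and follows essentially the same route as the paper: an interior bubble applied to the (polynomial) element residual for \eqref{LocalLowerBou:1}, then an edge/face bubble applied to the stress jump, reusing \eqref{LocalLowerBou:1} for the element-residual terms, to obtain \eqref{LocalLowerBou:2}. Your worry about the weights is well founded but consistent with the paper, which likewise factors out $\mu^{1/2}$ and $(\mu+\lambda)^{1/2}$ and absorbs them into the constant $C$, so no $\lambda$-robustness is claimed or needed here.
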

\begin{proof}
Let $\pmb{w}_\tau=(\pmb{f}+\nabla\cdot(\mu\nabla_w\pmb{v}_h)+\nabla((\mu+\lambda)\nabla_w\cdot\pmb{v}_h) )\phi_\tau(\pmb{x})$, where $\phi_\tau(\pmb{x})=27\lambda_1\lambda_2\lambda_3$ is a bubble function defined on $\tau$, we have
\begin{align*}
(\pmb{f}, \pmb{w}_\tau)_\tau=(\mu\nabla\pmb{u},\nabla\pmb{w}_\tau)_\tau+
((\mu+\lambda)\nabla\cdot\pmb{u},\nabla\cdot\pmb{w}_\tau)_\tau.
\end{align*}

Subtracting and adding  $(\pmb{f}_h, \pmb{w}_\tau)_\tau$ , $(\mu\nabla_w\pmb{u}_h,\pmb{w}_\tau)_\tau$ and
$((\mu+\lambda)\nabla_w\cdot\pmb{u}_h,\nabla\cdot\pmb{w}_\tau)_\tau$ from both sides of the above equation,  we get
\begin{align*}
&(\pmb{f}-\pmb{f}_h, \pmb{w}_\tau)_\tau+(\pmb{f}_h, \pmb{w}_\tau)_\tau-(\mu\nabla_w\pmb{u}_h,\nabla\pmb{w}_\tau)_\tau-
((\mu+\lambda)\nabla_w\cdot\pmb{u}_h,\nabla\cdot\pmb{w}_\tau)_\tau\\
&=(\mu(\nabla\pmb{u}- \nabla_w\pmb{u}_h),\nabla\pmb{w}_\tau)_\tau+
((\mu+\lambda)(\nabla\cdot\pmb{u}-\nabla_w\cdot\pmb{u}_h),\nabla\cdot\pmb{w}_\tau)_\tau.
\end{align*}

Using the integration by parts, inverse inequality and $\pmb{w}_\tau|_{\partial \tau}=0$, the above equation becomes
\begin{align*}
&(\pmb{f}_h+\nabla\cdot(\mu\nabla_w\pmb{u}_h)+\nabla((\mu+\lambda)\nabla_w\cdot\pmb{u}_h), \pmb{w}_\tau)_\tau \\
&=(\mu(\nabla\pmb{u}- \nabla_w\pmb{u}_h),\nabla\pmb{w}_\tau)_\tau+
((\mu+\lambda)(\nabla\cdot\pmb{u}-\nabla_w\cdot\pmb{u}_h),\nabla\cdot\pmb{w}_\tau)_\tau-(\pmb{f}-\pmb{f}_h, \pmb{w}_\tau)_\tau.
\end{align*}

Using the properties of the bubble function $\phi_\tau(\pmb{x})$, we obtain
\begin{align*}
&\|\pmb{f}_h+\nabla\cdot(\mu\nabla_w\pmb{u}_h)+\nabla((\mu+\lambda)\nabla_w\cdot\pmb{u}_h)\|^2_\tau \\
&\leq \mu^{1/2} \|\mu^{1/2}(\nabla\pmb{u}- \nabla_w\pmb{u}_h)\|_{\tau}\|\nabla\pmb{w}_\tau\|_\tau+
(\mu+\lambda)^{1/2}\|(\mu+\lambda)^{1/2}(\nabla\cdot\pmb{u}-\nabla_w\cdot\pmb{u}_h)_\tau\|\nabla\cdot\pmb{w}_\tau\|_\tau+\|\pmb{f}-\pmb{f}_h\|_\tau\|\pmb{w}_\tau\|_\tau\\
&\leq C(\|\mu^{1/2}(\nabla\pmb{u}- \nabla_w\pmb{u}_h)\|_{\tau} +
\|(\mu+\lambda)^{1/2}(\nabla\cdot\pmb{u}-\nabla_w\cdot\pmb{u}_h)\|_\tau h_\tau^{-1}\|\pmb{w}_\tau\|_\tau+\|\pmb{f}-\pmb{f}_h\|_\tau\|\pmb{w}_\tau\|_\tau\\
&\leq C(\|\mu^{1/2}(\nabla\pmb{u}- \nabla_w\pmb{u}_h)\|_{\tau} +
\|(\mu+\lambda)^{1/2}(\nabla\cdot\pmb{u}-\nabla_w\cdot\pmb{u}_h)\|_\tau +h_\tau\|\pmb{f}-\pmb{f}_h\|_\tau) h_\tau^{-1}\|\pmb{w}_\tau\|_\tau.
\end{align*}

Notice that $\|\pmb{w}_\tau\|_\tau=\|\pmb{f}_h+\nabla\cdot(\mu\nabla_w\pmb{u}_h)+\nabla((\mu+\lambda)\nabla_w\cdot\pmb{u}_h)\|_\tau$,  we have
\begin{align*}
&h_\tau\|\pmb{f}_h+\nabla\cdot(\mu\nabla_w\pmb{u}_h)+\nabla((\mu+\lambda)\nabla_w\cdot\pmb{u}_h)\|_\tau \\ 
&\leq C(\|\mu^{1/2}(\nabla\pmb{u}- \nabla_w\pmb{u}_h)\|_{\tau} +
\|(\mu+\lambda)^{1/2}(\nabla\cdot\pmb{u}-\nabla_w\cdot\pmb{u}_h)\|_\tau+h_\tau\|\pmb{f}-\pmb{f}_h\|_\tau).  
\end{align*}
Then we  have completed the proof of (\ref{LocalLowerBou:1}) .
 
Let $\pmb{v}_e=[(\mu \nabla_w\pmb{v}_h+(\mu+\lambda)(\nabla_w\cdot\pmb{v}_h )\pmb{I})\pmb{n}]_e\phi_e(\pmb{x})$,  where $\phi_e(\pmb{x})$ is the bubble function defined on the edge or face   $e$,  we arrive at  
\begin{align*}
\sum\limits_{\tau\in \omega_e}(\pmb{f},\pmb{v}_e)_\tau=\sum\limits_{\tau\in \omega_e}(\mu\nabla\pmb{u},\nabla\pmb{v}_e)_\tau+\sum\limits_{\tau\in \omega_e}((\mu+\lambda)\nabla\cdot\pmb{u},\nabla\cdot\pmb{v}_e)_\tau.
\end{align*}

Subtracting $\sum\limits_{\tau\in \omega_e}(\mu\nabla_w\pmb{u}_h,\nabla\pmb{v}_e)_\tau$ and $\sum\limits_{\tau\in \omega_e}((\mu+\lambda)\nabla_w\cdot\pmb{u}_h,\nabla\cdot\pmb{v}_e)_\tau$ from both sides of the above equation
, then  we get
\begin{align*}
&\sum\limits_{\tau\in \omega_e}(\pmb{f},\pmb{v}_e)_\tau-\sum\limits_{\tau\in \omega_e}(\mu\nabla_w\pmb{u}_h,\nabla\pmb{v}_e)_\tau-\sum\limits_{\tau\in \omega_e}((\mu+\lambda)\nabla_w\cdot\pmb{u}_h,\nabla\cdot\pmb{v}_e)_\tau\\
&=\sum\limits_{\tau\in \omega_e}(\mu(\nabla\pmb{u}-\nabla_w\pmb{u}_h),\nabla\pmb{v}_e)_\tau+\sum\limits_{\tau\in \omega_e}((\mu+\lambda)(\nabla\cdot\pmb{u}-\nabla_w\cdot\pmb{u}_h),\nabla\cdot\pmb{v}_e)_\tau.
\end{align*}

Using the properties of the bubble function $\phi_\tau(\pmb{x})$ and the integration by parts, we obtain
\begin{align*}
&\sum\limits_{\tau\in \omega_e}(\pmb{f},\pmb{v}_e)_\tau+\sum\limits_{\tau\in \omega_e}(\nabla\cdot(\mu\nabla_w\pmb{u}_h),\pmb{v}_e)_\tau-  <\mu\nabla_w\pmb{u}_h\pmb{n}_1,\pmb{v}_e>_e-  <\mu\nabla_w\pmb{u}_h\pmb{n}_2,\pmb{v}_e>_e\\
&+\sum\limits_{\tau\in \omega_e}(\nabla((\mu+\lambda)\nabla_w\cdot\pmb{u}_h),\pmb{v}_e)_\tau-
<(\mu+\lambda)\nabla_w\cdot\pmb{u}_h,\pmb{v}_e\cdot\pmb{n}_1>_e-<(\mu+\lambda)\nabla_w\cdot\pmb{u}_h,\pmb{v}_e\cdot\pmb{n}_2>_e\\
&=\sum\limits_{\tau\in \omega_e}(\mu(\nabla\pmb{u}-\nabla_w\pmb{u}_h),\nabla\pmb{v}_e)_\tau+\sum\limits_{\tau\in \omega_e}((\mu+\lambda)(\nabla\cdot\pmb{u}-\nabla_w\cdot\pmb{u}_h),\nabla\cdot\pmb{v}_e)_\tau.
\end{align*}

Merging the inner product of the edges, we have
\begin{align*}
&\sum\limits_{\tau\in \omega_e}(\pmb{f},\pmb{v}_e)_\tau+\sum\limits_{\tau\in \omega_e}(\nabla\cdot(\mu\nabla_w\pmb{u}_h),\pmb{v}_e) _\tau+\sum\limits_{\tau\in \omega_e}(\nabla((\mu+\lambda)\nabla_w\cdot\pmb{u}_h),\pmb{v}_e)\\
&-  <(\mu\nabla_w\pmb{u}_h+(\mu+\lambda)(\nabla_w\cdot\pmb{u}_h)\pmb{I})\pmb{n}_1,\pmb{v}_e>_e -  <(\mu\nabla_w\pmb{u}_h+(\mu+\lambda)(\nabla_w\cdot\pmb{u}_h)\pmb{I})\pmb{n}_2,\pmb{v}_e>_e\\
&=\sum\limits_{\tau\in \omega_e}(\mu(\nabla\pmb{u}-\nabla_w\pmb{u}_h),\nabla\pmb{v}_e)_\tau+\sum\limits_{\tau\in \omega_e}((\mu+\lambda)(\nabla\cdot\pmb{u}-\nabla_w\cdot\pmb{u}_h),\nabla\cdot\pmb{v}_e)_\tau.
\end{align*}

Subtracting and adding $(\pmb{f}_h,\pmb{v}_e)_\tau$ and merging, we obtain
\begin{align*}
&<[(\mu\nabla_w\pmb{u}_h+(\mu+\lambda)(\nabla_w\cdot\pmb{u}_h)\pmb{I})\pmb{n}],\pmb{v}_e>_e \\
&=\sum\limits_{\tau\in \omega_e}(\pmb{f}-\pmb{f}_h,\pmb{v}_e)_\tau+\sum\limits_{\tau\in \omega_e}(\pmb{f}_h+\nabla\cdot(\mu\nabla_w\pmb{u}_h)+\nabla((\mu+\lambda)\nabla_w\cdot\pmb{u}_h),\pmb{v}_e)_\tau\\
&~~~~-\sum\limits_{\tau\in \omega_e}(\mu(\nabla\pmb{u}-\nabla_w\pmb{u}_h),\nabla\pmb{v}_e)_\tau-\sum\limits_{\tau\in \omega_e}((\mu+\lambda)(\nabla\cdot\pmb{u}-\nabla_w\cdot\pmb{u}_h),\nabla\cdot\pmb{v}_e)_\tau.
\end{align*}

Using  Schwarz inequality and  inverse inequality, the above equation becomes
\begin{align*}
&\|[(\mu \nabla_w\pmb{v}_h+(\mu+\lambda)(\nabla_w\cdot\pmb{v}_h )\pmb{I})\pmb{n}]\|^2_e\\
&\leq \left|
\sum\limits_{\tau\in \omega_e}(\pmb{f}-\pmb{f}_h,\pmb{v}_e)_\tau+\sum\limits_{\tau\in \omega_e}(\pmb{f}_h+\nabla\cdot(\mu\nabla_w\pmb{u}_h)+\nabla((\mu+\lambda)\nabla_w\cdot\pmb{u}_h),\pmb{v}_e)_\tau\right|\\
&~~~~+\left|\sum\limits_{\tau\in \omega_e}(\mu(\nabla\pmb{u}-\nabla_w\pmb{u}_h),\nabla\pmb{v}_e)_\tau+\sum\limits_{\tau\in \omega_e}((\mu+\lambda)(\nabla\cdot\pmb{u}-\nabla_w\cdot\pmb{u}_h),\nabla\cdot\pmb{v}_e)_\tau\right|\\
&\lesssim  
 \left(\|\pmb{f}-\pmb{f}_h\|_{\omega_e}+ \| \pmb{f}_h+ \nabla\cdot(\mu\nabla_w\pmb{u}_h)+\nabla((\mu+\lambda)\nabla_w\cdot\pmb{u}_h\|_{\omega_e}\right)  \|\pmb{v}_e\|_{\omega_e}\\
&~~~~+\|\mu^{1/2}(\nabla\pmb{u}-\nabla_w\pmb{u}_h)\|_{\omega_e}\|\nabla\pmb{v}_e\|_{\omega_e}+\|(\mu+\lambda)^{1/2}(\nabla\cdot\pmb{u}-\nabla_w\cdot\pmb{u}_h)\|_{\omega_e} \|\nabla\cdot\pmb{v}_e\|_{\omega_e}\\
&\lesssim
 \left(\|\pmb{f}-\pmb{f}_h\|_{\omega_e}+ \| \pmb{f}_h+ \nabla\cdot(\mu\nabla_w\pmb{u}_h)+\nabla((\mu+\lambda)\nabla_w\cdot\pmb{u}_h\|_{\omega_e}\right)  \|\pmb{v}_e\|_{\omega_e}\\
&~~~~+\|\mu^{1/2}(\nabla\pmb{u}-\nabla_w\pmb{u}_h)\|_{\omega_e} h_e^{-1}\|\pmb{v}_e\|_{\omega_e}+\|(\mu+\lambda)^{1/2}(\nabla\cdot\pmb{u}-\nabla_w\cdot\pmb{u}_h)\|_{\omega_e} h_e^{-1}\|\pmb{v}_e\|_{\omega_e}\\
&\lesssim
 \left(h_e^{1/2}\|\pmb{f}-\pmb{f}_h\|_{\omega_e}+h_e^{1/2} \| \pmb{f}_h+ \nabla\cdot(\mu\nabla_w\pmb{u}_h)+\nabla((\mu+\lambda)\nabla_w\cdot\pmb{u}_h\|_{\omega_e}\right.\\
&~~~~\left.+h_e^{-1/2}\|\mu^{1/2}(\nabla\pmb{u}-\nabla_w\pmb{u}_h)\|_{\omega_e} +h_e^{-1/2}\|(\mu+\lambda)^{1/2}(\nabla\cdot\pmb{u}-\nabla_w\cdot\pmb{u}_h)\|_{\omega_e}\right)\|\pmb{v}_e\|_{e}\\
&\lesssim
 \left(h_e^{1/2}\|\pmb{f}-\pmb{f}_h\|_{\omega_e}+h_e^{1/2} \| \pmb{f}_h+ \nabla\cdot(\mu\nabla_w\pmb{u}_h)+\nabla((\mu+\lambda)\nabla_w\cdot\pmb{u}_h\|_{\omega_e}\right.\\
&~~~~\left.+h_e^{-1/2}\|\mu^{1/2}(\nabla\pmb{u}-\nabla_w\pmb{u}_h)\|_{\omega_e} +h_e^{-1/2}\|(\mu+\lambda)^{1/2}(\nabla\cdot\pmb{u}-\nabla_w\cdot\pmb{u}_h)\|_{\omega_e}\right)  \\
&~~~~\|(\mu \nabla_w\pmb{v}_h+(\mu+\lambda)(\nabla_w\cdot\pmb{v}_h )\pmb{I})\pmb{n}\|_e.
\end{align*}

Dividing   $\|(\mu \nabla_w\pmb{v}_h+(\mu+\lambda)(\nabla_w\cdot\pmb{v}_h )\pmb{I})\pmb{n}\|^2_e$ on both sides of the above inequality,  and using (\ref{LocalLowerBou:1}) and the definition $\mathrm{osc}(\pmb{f}, \omega_e)$, we obtain
\begin{align*}
&h_e^{1/2}\|[(\mu \nabla_w\pmb{v}_h+(\mu+\lambda)(\nabla_w\cdot\pmb{v}_h )\pmb{I})\pmb{n}]\|_e\\
&\leq C 
 \left(\mathrm{osc}(\pmb{f}, \omega_e) +\|\mu^{1/2}(\nabla\pmb{u}-\nabla_w\pmb{u}_h)\|_{\omega_e} +\|(\mu+\lambda)^{1/2}(\nabla\cdot\pmb{u}-\nabla_w\cdot\pmb{u}_h)\|_{\omega_e}\right).
\end{align*}
Then we  have completed the proof of (\ref{LocalLowerBou:2}) .
\end{proof}

Using  (\ref{LocalLowerBou:1}) and (\ref{LocalLowerBou:2}), then summing over all $e\in \mathcal{E}_h$ and all $\tau\in\mathcal{T}_h$,  we arrive at the following lower bound for the error estimator.
 \begin{theorem}[Lower Bound]
 Let  $\pmb{u}$  be the solution of (\ref{ela-Model:1}) and  $\pmb{u}_h=\{\pmb{u}_0^h,\pmb{u}_b^h\}\in  \mathcal{V}_h$ be the solution of (\ref{Dis-Weak-Gal}), respectively.  There exists a constant $C_1>0$, such that such 
\begin{align*}
\eta^2(\pmb{u}_h, \mathcal{T}_h)\leq C_2\left(|\mu^{1/2}( \nabla\pmb{u}- \nabla_{w}\pmb{u}_h)\|^2_{\tau}+
\|(\mu+\lambda)^{1/2} (\nabla\cdot\pmb{u}- \nabla_{w}\cdot\pmb{u}_h)\|^2_{\tau}+ \mathrm{osc}^2(\pmb{f},\mathcal{T}_h) \right).
\end{align*} 
wher the constant $C_2>0$, only depends on the shape regularity of $\mathcal{T}_h$ .
 \end{theorem}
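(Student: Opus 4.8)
The plan is to derive the global efficiency bound by exploiting the two local lower bounds (\ref{LocalLowerBou:1}) and (\ref{LocalLowerBou:2}) established in the preceding lemma, treating the four constituents of $\eta^2(\pmb{u}_h,\mathcal{T}_h)$ one at a time and then summing over the mesh. First I would dispose of the interior residual contribution $\eta_c^2$. Squaring (\ref{LocalLowerBou:1}), applying $(a+b+c)^2\lesssim a^2+b^2+c^2$, and multiplying through by the weight $(\mu^{-1}+(\mu+\lambda)^{-1})$ converts the local estimate into
\begin{align*}
\eta_c^2(\pmb{u}_h,\tau)\lesssim \|\mu^{1/2}(\nabla\pmb{u}-\nabla_w\pmb{u}_h)\|_\tau^2+\|(\mu+\lambda)^{1/2}(\nabla\cdot\pmb{u}-\nabla_w\cdot\pmb{u}_h)\|_\tau^2+\mathrm{osc}^2(\pmb{f},\tau),
\end{align*}
where the bookkeeping of the Lam\'e weights is routine once one records $(\mu^{-1}+(\mu+\lambda)^{-1})\mu\le 2$ (the residual $h_\tau^2\|\pmb{f}-\pmb{f}_h\|_\tau^2$ factor being exactly $\mathrm{osc}^2$, and the remaining $\lambda$-dependent factor being absorbed into $C_2$ when $\lambda$-robust constants are not insisted upon). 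Summing over all $\tau\in\mathcal{T}_h$ then controls $\sum_\tau\eta_c^2$ by the global energy error plus oscillation.

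Next I would treat the jump contribution $\eta_{nc}^2$. Squaring (\ref{LocalLowerBou:2}) and multiplying by $\mu^{-1}$ bounds each edge term $h_e\|J_e(\mu\nabla_w\pmb{u}_h+(\mu+\lambda)(\nabla_w\cdot\pmb{u}_h)\pmb{I})\|_e^2$ by the energy error and oscillation measured on the patch $\omega_e=\tau_1\cup\tau_2$. Rewriting $\sum_{\tau}\eta_{nc}^2(\pmb{u}_h,\tau)=\mu^{-1}\sum_{e}h_e\|J_e(\cdots)\|_e^2$ as a sum over edges and inserting the squared (\ref{LocalLowerBou:2}), the only point needing care is that each element is counted a bounded number of times among the overlapping patches $\{\omega_e\}$; by shape regularity this overlap is uniformly bounded, so the patchwise right-hand sides collapse into the global energy error plus $\mathrm{osc}^2(\pmb{f},\mathcal{T}_h)$. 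The oscillation term requires no further argument, since $\sum_\tau\mathrm{osc}^2(\pmb{f},\tau)=\mathrm{osc}^2(\pmb{f},\mathcal{T}_h)$ already appears verbatim on the right-hand side.

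The remaining, and genuinely hardest, piece is the stabilizer $\sum_\tau s_\tau(\pmb{u}_h,\pmb{u}_h)=s(\pmb{u}_h,\pmb{u}_h)$, which neither local bound addresses. Lemma \ref{lem:weakg-s} controls $\|\nabla_w\pmb{u}_h-\nabla\pmb{u}_0^h\|_{\mathcal{T}_h}^2$ \emph{by} $s(\pmb{u}_h,\pmb{u}_h)$, i.e.\ in the wrong direction, so bounding $s(\pmb{u}_h,\pmb{u}_h)$ from above by the energy error demands a separate argument. My plan here is to split $\pmb{u}_h=Q_h\pmb{u}+(\pmb{u}_h-Q_h\pmb{u})$ with $Q_h\pmb{u}=\{Q_0\pmb{u},Q_b\pmb{u}\}$ the natural projection of the exact solution ($Q_0$ the elementwise $L^2$-projection onto $[P_k(\tau)]^d$): for the projection part one has $s(Q_h\pmb{u},Q_h\pmb{u})=\sum_\tau h_\tau^{-1}\|Q_b(Q_0\pmb{u}-\pmb{u})\|_{\partial\tau}^2$, a higher-order quantity handled through the trace inequality and standard approximation estimates, while for the discrete-difference part I would invoke norm equivalence of the stabilized form $a_w(\cdot,\cdot)$ together with the a priori energy estimate to bound $s(\pmb{u}_h-Q_h\pmb{u},\pmb{u}_h-Q_h\pmb{u})$ by the full energy error. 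This stabilizer estimate is where the main difficulty concentrates, and it is also the step most sensitive to whether one insists on $\lambda$-robust constants; collecting it with the bounds for $\eta_c^2$, $\eta_{nc}^2$ and $\mathrm{osc}^2$ then yields the claimed lower bound with $C_2$ depending only on shape regularity.
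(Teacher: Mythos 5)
For the interior residual, the edge jump, and the oscillation your argument coincides with the paper's: the entire published proof is the single sentence preceding the theorem (``Using (\ref{LocalLowerBou:1}) and (\ref{LocalLowerBou:2}), then summing over all $e\in\mathcal{E}_h$ and all $\tau\in\mathcal{T}_h$\dots''), and the bookkeeping you supply --- squaring the local bounds, tracking the weight $(\mu^{-1}+(\mu+\lambda)^{-1})$, and invoking the finite overlap of the patches $\omega_e$ guaranteed by shape regularity --- is exactly what that sentence leaves implicit. So three of the four constituents of $\eta^2$ are treated the same way in both proofs, and your remark that $\lambda$-robustness of $C_2$ is lost in the $(\mu+\lambda)$-weighted term is accurate.

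The substantive issue is the stabilizer. You are right that $\sum_{\tau}s_\tau(\pmb{u}_h,\pmb{u}_h)$ is a summand of $\eta^2(\pmb{u}_h,\mathcal{T}_h)$ by (\ref{eta:1}) and that neither (\ref{LocalLowerBou:1}) nor (\ref{LocalLowerBou:2}) says anything about it; the paper simply never bounds this term, so the published proof is incomplete on precisely the point you isolate. Your proposed repair, however, does not close the gap. Bounding $s(\pmb{u}_h-Q_h\pmb{u},\pmb{u}_h-Q_h\pmb{u})$ via ``norm equivalence of $a_w$ together with the a priori energy estimate'' yields a bound of the form $Ch^{2k}\|\pmb{u}\|_{k+1}^2$; that is an a priori convergence rate, not an a posteriori bound by the computed error $\|\mu^{1/2}(\nabla\pmb{u}-\nabla_w\pmb{u}_h)\|^2_{\mathcal{T}_h}+\|(\mu+\lambda)^{1/2}(\nabla\cdot\pmb{u}-\nabla_w\cdot\pmb{u}_h)\|^2_{\mathcal{T}_h}$ on the given, generally non-quasi-uniform mesh, and efficiency requires the latter. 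The same objection applies to calling $s(Q_h\pmb{u},Q_h\pmb{u})$ ``higher order'': smallness in powers of $h$ does not imply domination by $E_h^2+\mathrm{osc}^2(\pmb{f},\mathcal{T}_h)$, since $E_h$ has no a priori lower bound on a given mesh. What is actually needed is a direct, local argument in the spirit of the treatment of $h_e^{-1}\|[\pmb{v}_0]\|_e^2$ in \cite{Chen2014problems}: use that the exact solution is single-valued across edges to rewrite $Q_b\pmb{u}_0^h-\pmb{u}_b^h$ in terms of $\pmb{u}_0^h-\pmb{u}$ and $\pmb{u}_b^h-Q_b\pmb{u}$, and then control the resulting boundary terms by local trace and Poincar\'e-type inequalities in terms of $\nabla\pmb{u}-\nabla_w\pmb{u}_h$ on the adjacent elements (or, alternatively, augment the error norm on the left-hand side of the theorem by the stabilizer itself, which is the route several WG a posteriori papers take). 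As written, both the paper's proof and yours leave the stabilizer portion of the lower bound unproved.
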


 \section{Numeriacl Experiments}
\setcounter{equation}{0}

In this section,  we  give two experiments to verify the theoretical result.  During these experiments, we adopt the lowest order($k=1$) during  the weak finite element space $\mathcal{T}_h$ and the energy norm $\|\cdot\|_A=a_w(\cdot,\cdot)^{\frac{1}{2}}$ to do the error analysis,  the Lam\'{e} constants $\mu=0.5$ and    
$\lambda=1.0$, and the error control constant $tol = 10^{-8}$.

\begin{example}\label{example1}
In this example, we examine the 'L-shape' problem in two dimension. Let $\Omega =(-1,1)^2\backslash (0,1)\times(-1,0)$, the proper vector source $\pmb{f}$ and the boundary function $\pmb{g}$ are chosen to ensure the solution 
\begin{align*}
\pmb{u}= (u_1, u_2)^T,
\end{align*}
where 
$u_1(x, y) = u_2 (x, y)= r^{\frac{2}{3}}\sin(\frac{2}{3}\theta)$ in polar coordinates.
\end{example}

During this example, we adopt the initial mesh like the left figure of  Figure \ref{figure-example-1}.  After performing the AMWG-FEM, we can see that the refinement elements are concentrated with singular of the solution $\pmb{u}$.  The right figure of Figure   \ref{figure-example-1} shows  the 11st   refinement meshes with $\vartheta=0.5$. 

In the left figure of Figure \ref{figure-example-2}, the abscissa value  represents the number of unknowns of the mesh $\mathcal{T}_l$, and the ordinate value $\|\pmb{u}-\pmb{u}_l\|_{A}$  represents the energy norm of the error between the solution $\pmb{u}$ and the modified weak Galerkin finite element solution $\pmb{u}_l$.     we present the error curve  about the errors of the solution $\pmb{u}$ and the modified weak Galerkin finite element solution $\pmb{u}_l$  under the energy norm with the D\''{o}ffler parameter  $\vartheta = 0.1, 0.3, 0.5$ .  In the right of  of Figure \ref{figure-example-2},  the ordinate value represents    the estimator  $\eta(\pmb{u}_l, \mathcal{T}_l)$.  All straight lines with slope $\frac{1}{2}$ can be moved vertically in Figure \ref{figure-example-2}.
\begin{figure}[!ht]
\begin{minipage}[t]{0.52\linewidth}
\centering
\includegraphics[width=3in]{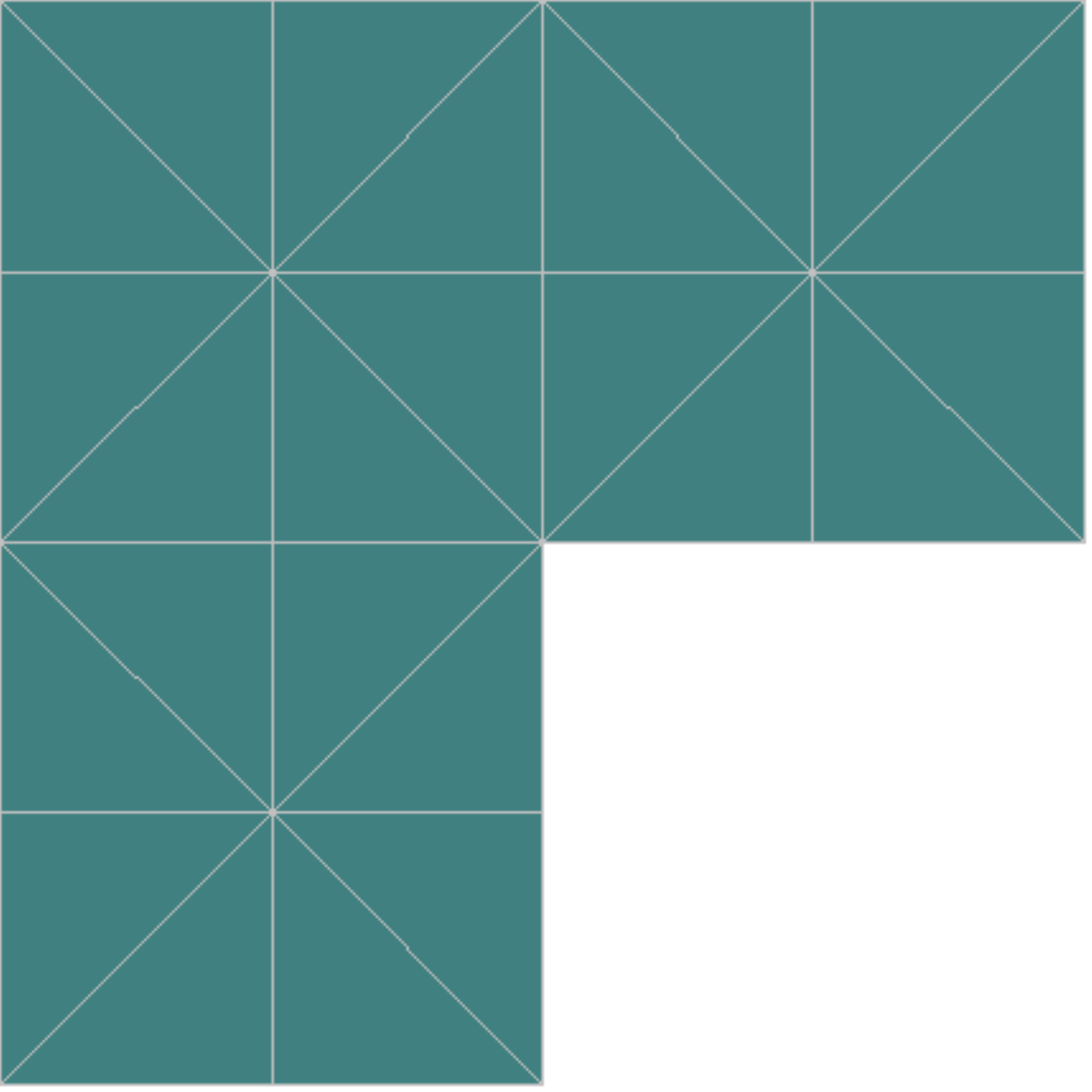}
\end{minipage}%
\begin{minipage}[t]{0.52\linewidth}
\centering
\includegraphics[width=3in]{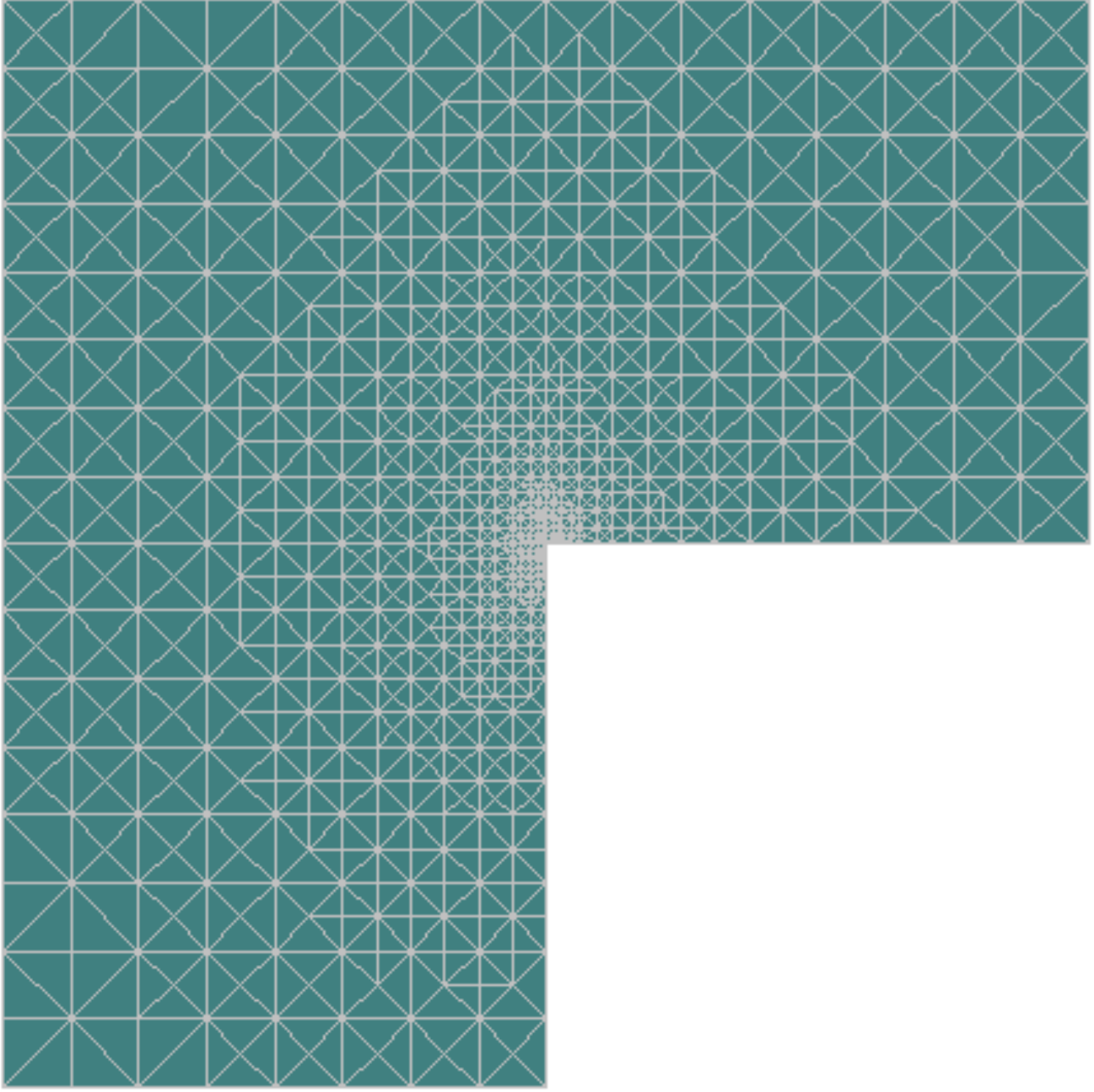}
\end{minipage}
\caption{The initial mesh (left) and the 11st refinement mesh(right) of Exmaple \ref{example1}.} \label{figure-example-1}
\end{figure}

\begin{figure}[!ht]
\begin{minipage}[t]{0.52\linewidth}
\centering
\includegraphics[width=3in]{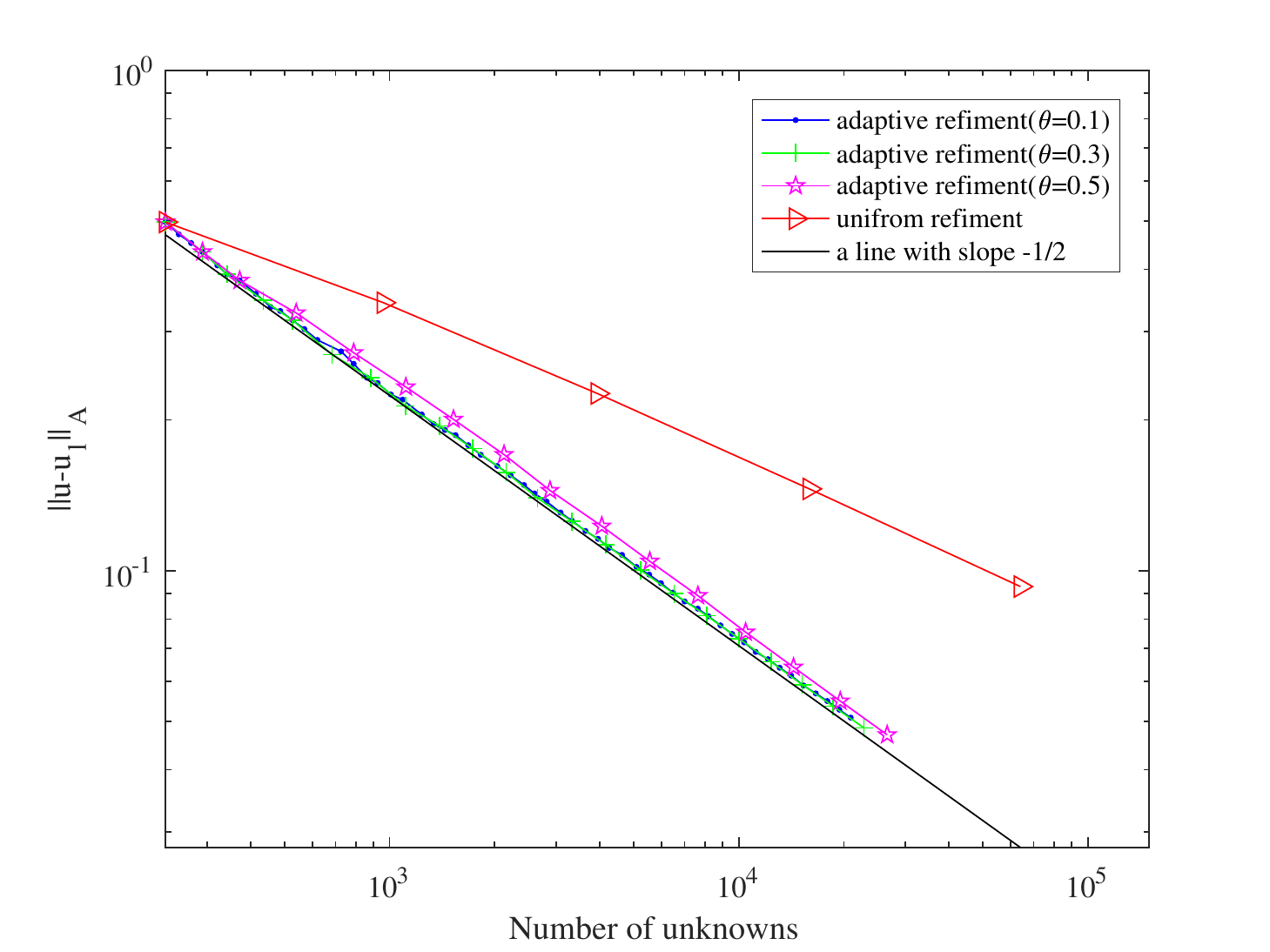}
\end{minipage}%
\begin{minipage}[t]{0.52\linewidth}
\centering
\includegraphics[width=3in]{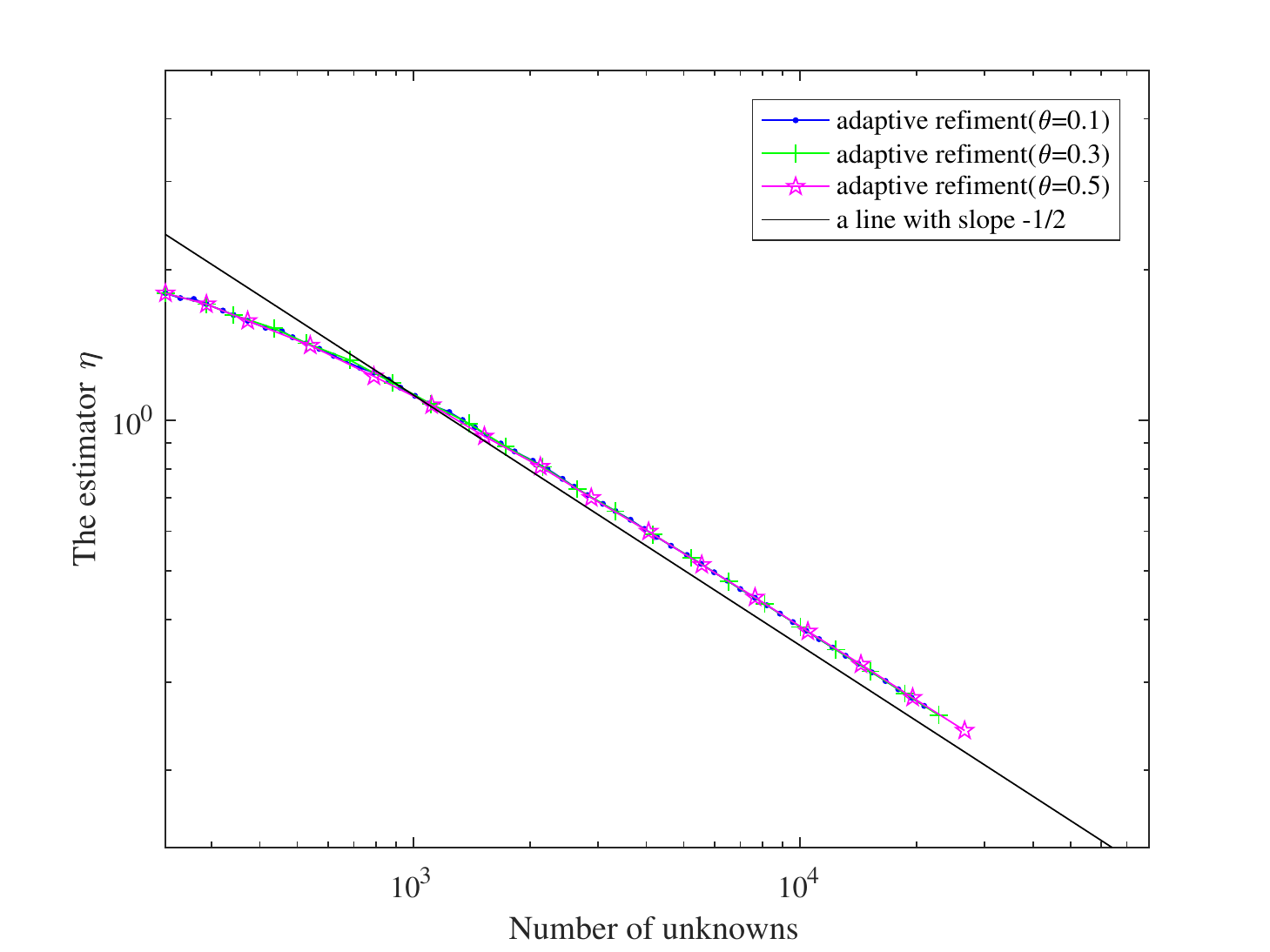}
\end{minipage}
\caption{The curves of  $\|\pmb{u}-\pmb{u}_l\| _A$ (left) and $\eta(\pmb{u}_l, \mathcal{T}_l)$ (right) for $\vartheta=0.1, 0.3, 0.5$  of Exmaple \ref{example1}.} \label{figure-example-2}
\end{figure}

\begin{example}\label{example2}
In this example, we examine the 'L-shape' problem in three dimension. Let $\Omega =(-1,1)^3\backslash (0,1)\times(0,1)\times(-1,1)$, the proper vector source $\pmb{f}$ and the boundary function $\pmb{g}$ are chosen to ensure the solution  
\begin{align*}
\pmb{u}= (u_1, u_2, u_3)^T,
\end{align*}
where 
$u_1(x, y, z) = u_2 (x, y, z)= u_3 (x, y, z)= r^{\frac{2}{3}}\sin(\frac{2}{3}\theta)$, in polar coordinates.

\end{example}

During this example, we adopt the initial mesh like the left figure of  Figure \ref{figure-example-3}.  After performing the AMWG-FEM, we can see that the refinement elements are concentrated with singular of the solution $\pmb{u}$.  The right figure of Figure   \ref{figure-example-3} shows  the 12nd  refinement meshes with $\vartheta=0.5$. 

In the left figure of Figure \ref{figure-example-4}, the abscissa value  represents the number of unknowns of the mesh $\mathcal{T}_l$, and the ordinate value $\|\pmb{u}-\pmb{u}_l\|_{A}$  represents the energy norm of the error between the solution $\pmb{u}$ and the modified weak Galerkin finite element solution $\pmb{u}_l$.     we present the error curve  about the errors of the solution $\pmb{u}$ and the modified weak Galerkin finite element solution $\pmb{u}_l$  under the energy norm with the D\''{o}ffler parameter  $\vartheta = 0.1, 0.3, 0.5$ .  In the right of  of Figure \ref{figure-example-4},  the ordinate value represents    the estimator  $\eta(\pmb{u}_l, \mathcal{T}_l)$.  All straight lines with slope $\frac{1}{3}$ can be moved vertically in Figure \ref{figure-example-4}.

\begin{figure}[!ht]
\begin{minipage}[t]{0.52\linewidth}
\centering
\includegraphics[width=3in]{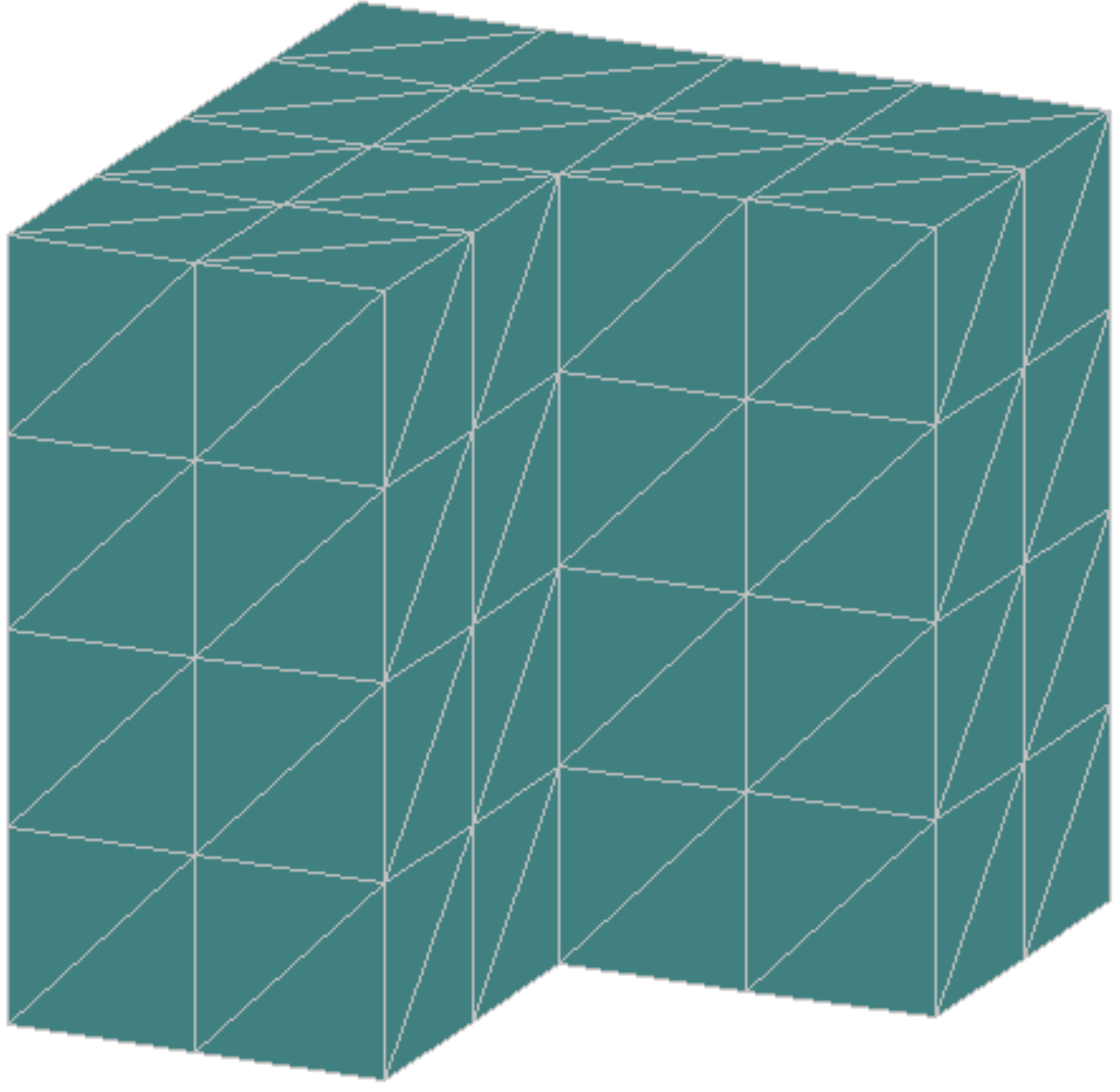}
\end{minipage}%
\begin{minipage}[t]{0.52\linewidth}
\centering
\includegraphics[width=3in]{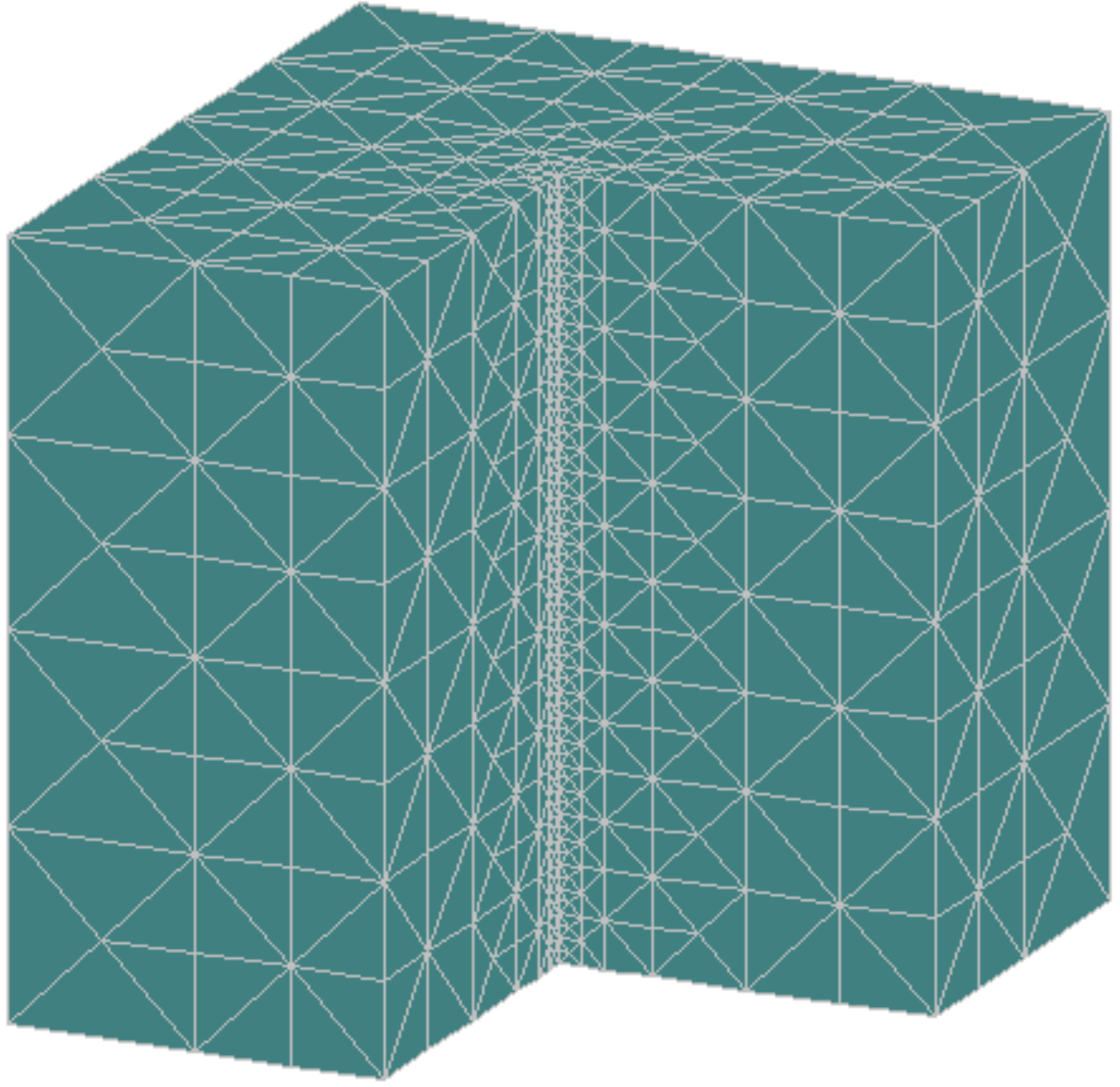}
\end{minipage}
\caption{The initial mesh (left) and the 12nd refinement mesh(right) of Exmaple \ref{example2}.}\label{figure-example-3}
\end{figure}

\begin{figure}[!ht]
\begin{minipage}[t]{0.52\linewidth}
\centering
\includegraphics[width=3in]{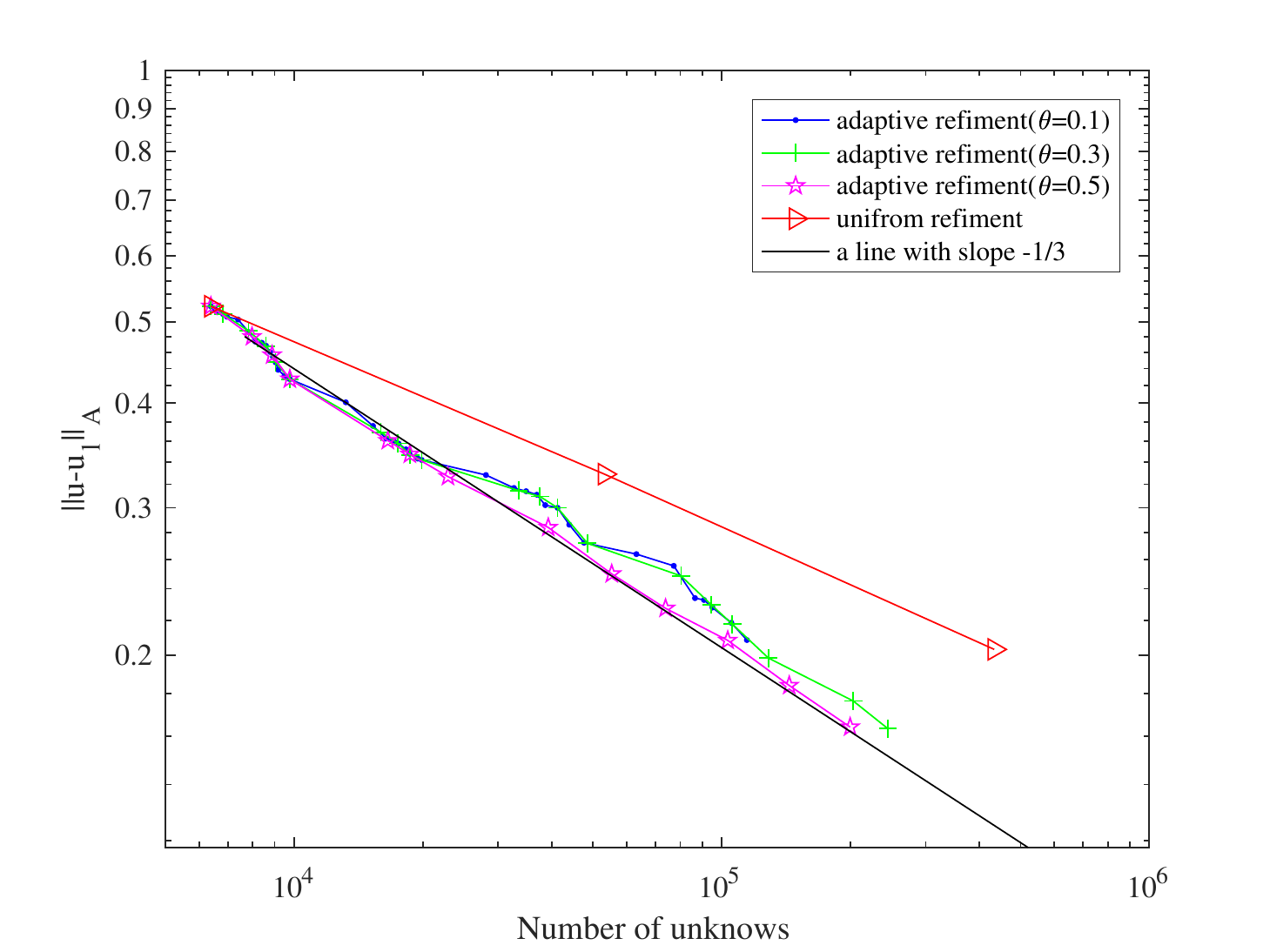}
\end{minipage}%
\begin{minipage}[t]{0.52\linewidth}
\centering
\includegraphics[width=3in]{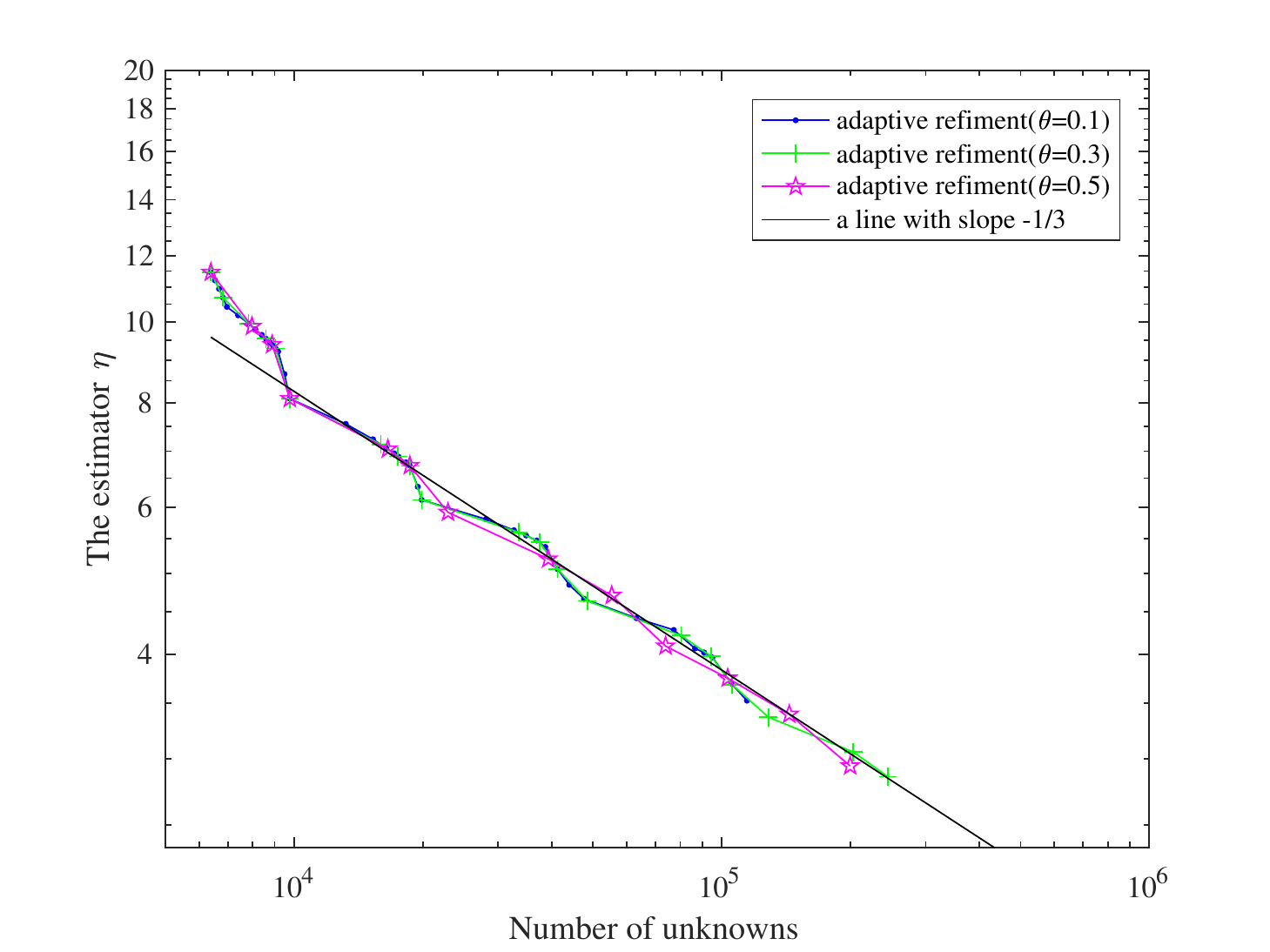}
\end{minipage}
\caption{The curves of  $\|\pmb{u}-\pmb{u}_l\| _A$ (left) and $\eta(\pmb{u}_l, \mathcal{T}_l)$ (right) for $\vartheta=0.1, 0.3, 0.5$  of Exmaple \ref{example2}.} \label{figure-example-4}
\end{figure}

 From the above numerical examples, we verify the relibility and efficiency of the estimator in (\ref{eta:1}), and the convergence of the AMWG-FEM.


\end{document}